\newtheorem{theorem}{Theorem}[section]
\newtheorem{lemma}[theorem]{Lemma}
\newtheorem{proposition}[theorem]{Proposition}
\newtheorem{corollary}[theorem]{Corollary}
\theoremstyle{definition}
\newtheorem{definition}[theorem]{Definition}
\theoremstyle{remark}
\newtheorem{remark}[theorem]{Remark}
\numberwithin{equation}{section}
\newcommand{\ZZ}{{\mathbb  Z}}
\newcommand{\RR}{{\mathbb  R}}
\newcommand{\CC}{{\mathbb  C}}
\newcommand{\NN}{{\mathbb  N}}
\newcommand{\QQ}{{\mathbb  Q}}
\newcommand{\OO}{{\mathcal O}}
\newcommand{\C}{{\mathcal C}}
\begin{document}

\title{Abelian and non-abelian numbers via 3D Origami}

\author[J.I.~Royo Prieto]{Jos\'{e} Ignacio Royo Prieto}
\address{Department of Applied Mathematics, University of the Basque Country UPV/EHU, Alameda de Urquijo s/n 48013 Bilbao, Spain.}
\email{joseignacio.royo@ehu.es}
\thanks{The first author is partially supported by the UPV/EHU grant EHU12/05.}

\author[E.~Tramuns]{Eul\`alia Tramuns}
\address{Departament de Matem\`atica Aplicada IV, Universitat Polit\`ecnica de Catalunya, Barcelona Tech.
C/ Jordi Girona 1-3, M\`odul C3, Campus Nord, 08034 Barcelona, Spain}
\curraddr{}
\email{etramuns@ma4.upc.edu}
\thanks{The second author is partially supported by MTM2011-28800-C02-01 from Spanish MEC}

\subjclass[2010]{11R32, 12F05, 51M15, 51M20.}

\date{\today}

\begin{abstract}
In this work we introduce new folding axioms involving easy 3D manoeuvres with the aim to push forward the arithmetic limits of the Huzita-Justin axioms. Those 3D axioms involve the use of a flat surface and the rigidity property of convex polyhedra. Using those folding moves, we show that we can construct all Abelian numbers, and numbers whose Galois group is not solvable.
\end{abstract}

\maketitle

\section{Introduction}

\subsection{The scope of Origami folding axioms}

A classical result of Number Theory  (see, for example, \cite[Corollary 10.1.7]{cox}) establishes that the set $\C$ of points constructed using  ruler and compass (RC, for short) with initial set $\{0,1\}$ is the smallest subfield of $\mathbb{C}$ which is closed under the operation of taking square roots. Thus, constructions that involve solving  cubic equations, such as the trisection of an arbitrary angle or finding the side length of a cube whose volume is the double of another given cube, are not possible in general using RC. More than two thousand years before those constructions were proven to be impossible by Pierre Wantzel, Archimedes found a verging construction (i.e. {\em neusis} construction) with a marked ruler which made it possible to trisect any given angle. So, the usage of new tools (formally, new axioms) allowed mathematicians to go beyond $\C$.

It is well known that the description of the origami folding operations described by the Huzita-Justin axioms (HJAs, for short, see section \ref{sec:hja}) leads to the set of points constructible with origami, which is the smallest subfield $\OO$ of the complex plane $\CC$ which is closed under the operations of taking square roots, cubic roots and complex conjugation (see \cite[Theorem 5.2]{alperin}). The elements of $\OO$ are called {\em origami numbers}. Thus, we have $\C\subset\OO$ as fields. Notice that this inclusion is a strict one: origami numbers like $\cos(\pi/9)$, $\sqrt[3]{2}$ and the seventh root of unity $\zeta_7=e^{2\pi i/7}$ do not belong to $\C$ (and thus, the trisection of an angle of 60 degrees, the duplication of a cube of volume 1 and the construction of a regular heptagon are not possible with RC). Moreover using the HJAs one can solve any cubic equation (see Lemma \ref{lem:cubica}), thus allowing all the geometric constructions described above. Nevertheless, there are algebraic numbers like the 11-th root of unity $\zeta_{11}=e^{2\pi i/11}$ which are are not constructible using origami, as modelized by the HJAs.

We are interested in extending the scope of the referred origami axioms, that is, in adding new axioms to the HJAs in order to get fields larger than $\OO$, and studying the arithmetic properties of those new numbers.

\subsection{Beyond the Huzita-Justin axioms: $n$-fold axioms}
\label{subsec:nfolds}

The description of the folding moves given by  the HJAs just covers a small part of the manoeuvres usually performed by origamists, and it is natural to consider extending the set of folding axioms.
In \cite{alperinlang} it is proved that the HJAs form a {\em complete set of 1-fold axioms} in the sense that they describe all possible combinations of alignments (among points and lines) which may be achieved by one single fold. As a consequence, no more folding axioms are to be found, if we stick to axioms where just one fold is performed. In the same article new axioms requiring the performance of $n$ simultaneous folds are introduced, considering alignments which involve the new lines determined by the creases of those $n$ folds. For example, one of the 2-fold axioms consists in trisecting a given angle (notice that the alignments required involve the input lines defining the angle and the output ones that trisect it). The payoff of considering those moves as axioms is enormous from the arithmetic point of view: using Lill's method to evaluate polynomials (see \cite{lill}), it is shown how to find the real roots of any polynomial of degree $n$ and coefficients in $\ZZ$ using $(n-2)$-fold axioms. In turn every real algebraic number can be reached.

\subsection{New 3D folding axioms}

In this work we propose new folding axioms that, added to the HJAs, allow us to construct new numbers.
The new ingredient we propose consists in incorporating 3D origami moves, that is, folds that do not leave the paper flat after being executed. We want the folding procedures behind our axioms to satisfy the following properties:

\begin{enumerate}

\item the folds involved should be fully referenced;
\item the manoeuvres should represent relatively easy and common folding;
\item the alignment skills needed to perform the moves should be similar to those needed in the HJAs.

\end{enumerate}

The third condition rules out the $n$-fold axioms of subsection \ref{subsec:nfolds} for $n\ge2$. Thus, we shall stick to moves easier to perform, perhaps renouncing to the big arithmetic achievement those $n$-fold axioms provide. Our first additional axioms are based on the fact that, starting with a regular polygon of $n+1$ sides, an easy folding sequence using 3D moves (see Fig. \ref{fig:heptagon}) yields a regular polygon of $n$ sides. We call those new axioms the Regular Polygon Axioms (RPAs for short). Using the RPAs and the HJAs and starting with $\{0,1\}$, we can get regular polygons with any number of sides (in particular, we can construct a regular 11-gon, which was not possible with the HJAs), and as we show in section \ref{sec:regular}, this leads us to all the algebraic numbers whose Galois group is Abelian. Our second axioms generalize the RPAs by starting with a cyclic polygon (i.e., a polygon inscribed in a circle). We call them the Cyclic Polygon Axioms (CPAs, for short), and they take us further on, so that we can get numbers whose Galois group is non-solvable (in particular, those numbers cannot be expressed using radicals).

This article is organized as follows: in section \ref{sec:hja} we briefly recall the HJAs and the characterization of the origami numbers. In section \ref{sec:pyramids} we describe the origami moves that lead us to the new axioms and discuss their axiomatization. In section \ref{sec:regular} we introduce the RPAs and describe the number fields we get using them. In section \ref{sec:cyclic} we establish the geometric validity of our constructions and deal with the CPAs. Finally, in section \ref{sec:conclusions} we discuss the arithmetic limits of polyhedral constructions, and pose some open questions for further research.

\section{Origami numbers}
\label{sec:hja}

In this section we briefly recall the Huzita-Justin axioms (HJAs, for short) and their arithmetic scope. We refer to the introduction of  \cite{alperinlang} for a good account of their discovery, history and references. Those axioms are the following:

\begin{description}

\item[O1] we can fold the line connecting two given points;
\item[O2] we can fold the mediatrix of two given points;
\item[O3] we can fold the bisector of the angle defined by two given lines;
\item[O4] we can fold, through a given point, a perpendicular to a given line;
\item[O5] we can fold a line passing through a given point that places another given point onto a given line;
\item[O6] we can fold a line that places simultaneously two given points onto two given lines, respectively;
\item[O7] we can fold a line perpendicular to a given line that places a given point onto another given line.

\end{description}
In the literature of constructions, the phrases ``we can fold'' and  ``given'' stand for ``we can construct'' and ``constructible''.
For technical reasons, we shall also consider the following intersection axiom:
\begin{description}
\item[LineIntersection (LI for short)] the intersection point of two given non-parallel lines is constructible.
\end{description}
In the literature, sometimes this axiom is only implicitly assumed, but notice that
it is needed in order to get points. As a convention, in the sequel, we shall consider that the HJAs is the set of axioms O1-O7, along with LI.

\begin{definition}
We say that $\alpha\in\CC$ is an {\em origami number} if there is a finite sequence of the HJAs that starts with $\{0,1\}$ and ends in $\alpha$. We denote the set of all origami numbers as $\OO$.
\end{definition}
% With initial set $\{0,1\}$, one can get the same points using the axioms O1-O6 or O1-O5 and O7.
Given $0$ and $1$, axioms O1 and O6 easily construct anything the others do. The latter yields the simultaneous tangents to the parabolas given by the points and lines of the input, and involves solving of a cubic equation. In fact, we have the following result, which proof we reproduce, essentially, following \cite[section 5.1]{alperin}:
\begin{lemma}\label{lem:cubica}
Consider a number field $K$ as initial set. Then, the application of the HJAs yields all the solutions of any cubic equation with coefficients in $K$.
\end{lemma}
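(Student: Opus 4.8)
The plan is to reduce an arbitrary cubic over $K$ to a normalized form, and then realize its roots as the slopes of common tangent lines to two parabolas, invoking axiom O6. First I would observe that it suffices to treat a monic cubic $x^3 + ax^2 + bx + c = 0$ with $a,b,c\in K$, since dividing by the leading coefficient keeps the coefficients in $K$; moreover, since $K$ is closed under the field operations, applying the Tschirnhaus substitution $x \mapsto x - a/3$ (or not — one can work directly) only changes coordinates by elements of $K$, which are constructible from $K$ by the HJAs. The key geometric fact, due to Alperin, is that O6 lets us fold a line placing two given points $P_1, P_2$ onto two given lines $\ell_1, \ell_2$ simultaneously; such a crease is a common tangent to the two parabolas with foci $P_i$ and directrices $\ell_i$. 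So the heart of the argument is to choose $P_1,\ell_1,P_2,\ell_2$ with coordinates in $K$ so that the common-tangent condition becomes exactly the given cubic.

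Concretely, I would take the first parabola to be $y = \tfrac12 x^2$ (equivalently $x^2 = 2y$), with focus $(0,\tfrac12)$ and directrix $y=-\tfrac12$, both defined over $K$; a line of slope $t$ tangent to it is $y = tx - \tfrac12 t^2$, touching at the point $(t, \tfrac12 t^2)$. For the second parabola I would choose one whose tangent lines encode the remaining data: taking the parabola $(y - \beta)^2 = \ldots$ or, following Alperin, the parabola with focus and directrix chosen so that the line $y = tx + d$ is tangent to it precisely when $t$ satisfies $2d = -2\alpha t^2 \cdots$. The standard choice is: parabola $1$ gives tangents $y = tx - \tfrac{1}{2}t^2$; parabola $2$, with a suitable focus $(\alpha,\beta)$ and directrix $x = \gamma$ (all in $K$), gives the constraint that a line $y = tx + u$ is tangent iff $u = $ (a quadratic in $t$ with $K$-coefficients involving $\alpha,\beta,\gamma$). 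Equating the intercept $-\tfrac12 t^2$ from the first with the tangency constraint from the second produces a cubic in $t$; by solving the resulting system $a = f(\alpha,\beta,\gamma)$, $b = g(\alpha,\beta,\gamma)$, $c = h(\alpha,\beta,\gamma)$ for $\alpha,\beta,\gamma$ in terms of $a,b,c$, one sees these can be chosen in $K$ (they come out as polynomial, or at worst rational, expressions in $a,b,c$). Then every real slope $t$ of a common tangent is a root of the cubic, and conversely each real root yields a common tangent obtainable by O6; the slope $t$ is then recovered by constructing the crease line via O6 and intersecting auxiliary lines (using O1, O4, LI) to extract $t$ as a constructible point.

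The main obstacle is bookkeeping rather than conceptual: one must pin down the exact parabolas so that the algebra of ``common tangent'' collapses to the target cubic with the substitution data living in $K$, and one must address the reality issue — O6 physically produces only real folds, so a priori one obtains only the real roots of the cubic. I would handle the latter by noting that a cubic over $K$ always has at least one real root, and that once one root $r\in K(r)$ is constructed, the cubic factors as $(x-r)(x^2 + px + q)$ with $p,q \in K(r)$, so the remaining two roots are obtained by the HJA construction of square roots (O2/O5 suffice for square roots); hence all three roots, real or complex, are reached. A secondary point to verify is genericity: for degenerate configurations the two parabolas might share fewer than three tangents or be tangent to each other, but this corresponds exactly to the cubic having repeated roots, and such cases are handled by passing to the (lower-degree) squarefree part, whose roots are then RC-constructible from $K$ and a fortiori HJA-constructible. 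Assembling these pieces — reduction to monic form over $K$, the explicit parabola/tangent dictionary, extraction of one real root via O6, and completion of the remaining roots by square roots — completes the proof.
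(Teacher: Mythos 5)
Your proposal follows essentially the same route as the paper: reduce to a depressed cubic over $K$, realize its roots as slopes of common tangents to two parabolas with $K$-rational foci and directrices obtained via O6 (the paper's explicit choice is $(y-a/2)^2=2bx$ together with $y=x^2/2$, citing Alperin for the slope computation), extract the slope as a constructible length, and then factor out the real root and finish the remaining (possibly complex) roots with a quadratic. The only difference is that you leave the second parabola unspecified where the paper writes it down, but this is the bookkeeping you correctly flag, and the rest of your argument matches the paper's.
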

\begin{proof}
Field operations allow us to perform a change of variable and reduce ourselves to the case $x^3+a x+b=0$. Consider the parabolas $(y-a/2)^2=2bx$ and  $y=x^2/2$,
whose directrices and foci are constructible using field operations involving $a,b$. Axiom O6 constructs a line that is tangent to  both. A straightforward computation (see \cite[section 5.1]{alperin}) shows that the slope $m$ of that line satisfies $m^3+a m+ b=0$. Notice that, given an oblique line $\ell$, its slope is realized as the vertical side of a straight triangle with horizontal side of length 1 and hypothenuse parallel to $\ell$. So, using the HJAs, we can construct a real root $m$ of the cubic. Using $m$ and just field operations, we can factor the cubic (e.g. using Ruffini's scheme), reducing ourselves to finding the roots of a quadratic equation, which is solvable using compass and straight edge constructions, and thus, with the HJAs.
\end{proof}

The following result summarizes some characterizations of $\OO$ (see \cite[10.3]{cox} for a proof of part (\ref{torre}) just adding O6 to the usual axioms of ruler and compass):

 \begin{theorem}
 The set $\OO\subseteq\mathbb{C}$ of origami numbers is the smallest subfield of $\mathbb{C}$ which is closed under the operations of taking square and cubic roots and complex conjugation (\cite{alperin}). Moreover, we have that $\alpha\in\OO$ is equivalent to each of the following:
\begin{enumerate}

\item $\alpha$ is constructible by marked ruler (\cite{martin});

\item $\alpha$ is constructible by intersecting conics (\cite{alperin});

\item $\alpha$ lies in a 2-3 tower $\mathbb{Q}=F_0\subset\dots\subset F_n$ (\cite{videla}).
\label{torre}

\end{enumerate}
\end{theorem}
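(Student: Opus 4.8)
The plan is to prove the displayed field-theoretic description first and then read off the three equivalences from it, using Lemma~\ref{lem:cubica} for the cubic-solving power of the HJAs and citing \cite{alperin,martin,videla} for the steps that run exactly parallel to classical ruler-and-compass theory.

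\textbf{The field description.} A standard fact (see \cite{alperin}) is that $\OO$ contains every ruler-and-compass number; in particular $\OO$ is a subfield of $\CC$ closed under $\alpha\mapsto\sqrt\alpha$, and it contains $i$ and a primitive cube root of unity $\omega$. It is also closed under complex conjugation: the real axis is the line through $0$ and $1$, so reflecting an already-constructed point across it is a single fold. For cube roots, write $\alpha\in\OO$ as $\alpha=\rho\,e^{i\theta}$ with $\rho=\sqrt{\alpha\bar\alpha}\in\OO$ and $e^{i\theta}=\alpha/\rho\in\OO$; Lemma~\ref{lem:cubica} applied to $x^{3}-\rho=0$ yields $\rho^{1/3}$, and applied to $4\cos^{3}\varphi-3\cos\varphi=\cos\theta$ it trisects the argument, so $\alpha^{1/3}=\rho^{1/3}e^{i\theta/3}\in\OO$. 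For minimality, let $F\subseteq\CC$ be any subfield closed under square roots, cube roots and conjugation; then $i,\omega\in F$ by the same reasoning. One checks move by move that if the input points have coordinates in $F$ then so do the output points: the lines, perpendiculars and bisectors of O1--O4, the intersection LI, the tangent-from-a-point-to-a-parabola of O5, and O7 all reduce to linear or quadratic systems over $F$, hence are solvable in $F$; the only genuinely cubic move is O6, whose common-tangent slope satisfies a cubic over $F$ (the computation reproduced in the proof of Lemma~\ref{lem:cubica}), all of whose roots lie in $F$ by Cardano's formulas since $F$ is closed under square and cube roots and contains $\omega$. Hence $\OO\subseteq F$, so $\OO$ is the smallest such field.

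\textbf{The equivalences.} For (\ref{torre}), the forward implication quantifies the previous bookkeeping: along a construction sequence for $\alpha$ each new coordinate satisfies a polynomial of degree $1$, $2$ or $3$ over the field generated so far, so after discarding the degree-$1$ stages and splitting any reducible cubic one obtains a tower $\QQ=F_{0}\subset\cdots\subset F_{n}\ni\alpha$ with every step of degree $2$ or $3$. The converse is an induction on $n$: a degree-$2$ step adjoins a square root of an element of $F_{i}\subseteq\OO$, and a degree-$3$ step adjoins a root of a cubic over $F_{i}$, all of whose roots lie in $\OO$ by Lemma~\ref{lem:cubica}; hence $F_{n}\subseteq\OO$. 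Granting (\ref{torre}), the equivalence with marked-ruler constructibility (\cite{martin}) and with constructibility by intersecting conics (\cite{alperin}) reduces to two classical facts in each case: a single \emph{neusis}, respectively a single intersection of two conics, stays inside a $2$--$3$ tower (two conics meet in at most four points, and the resulting quartic has Galois group a subgroup of $S_{4}$, whose composition factors have orders $2,3,2,2$), so both fields are contained in $\OO$; conversely, both tools carry out the cubic solving of O6 --- via the generalized Archimedes verging for the marked ruler, and via a Menaechmus-type intersection of two conics --- so $\OO$ is contained in each.

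\textbf{Main obstacle.} The delicate point is the minimality argument, and within it the O6 case: one must confirm that the common-tangent construction is governed by a \emph{single} cubic over the current field and that none of its roots leaves $F$, which is exactly where the explicit slope computation of \cite{alperin} and the presence of $i$ and $\omega$ in $F$ are used. On the equivalence side the non-routine direction is always the inclusion $\OO\subseteq(\text{new field})$, i.e.\ simulating an arbitrary cubic extraction with the weaker-looking instrument; this is the substantive content of \cite{martin} and \cite{alperin}, and here we only sketch it.
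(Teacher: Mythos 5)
Your proposal is a correct outline, but note that the paper itself does not prove this theorem: it is stated as a summary of known results, with the field description and the three equivalences attributed to \cite{alperin}, \cite{martin}, \cite{videla}, and with \cite[10.3]{cox} indicated for part (\ref{torre}). Your sketch (closure under square/cube roots via Lemma \ref{lem:cubica} and angle trisection, minimality via a move-by-move check with Cardano handling O6, and the 2--3 tower bookkeeping) reproduces exactly the standard arguments of those references, so it is consistent with the paper's intent; the only places where you are genuinely deferring to the literature rather than proving anything are the marked-ruler and conic-intersection equivalences, which is also what the paper does.
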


\begin{figure}
\psfrag{1}[][]{1}
\psfrag{2}[][]{2}
\psfrag{3}[][]{3}
\psfrag{4}[][]{4}
\psfrag{5}[][]{5}
\psfrag{6}[][]{6}
\psfrag{7}[][]{7}
\psfrag{8}[][]{8}
\includegraphics[scale=0.5]{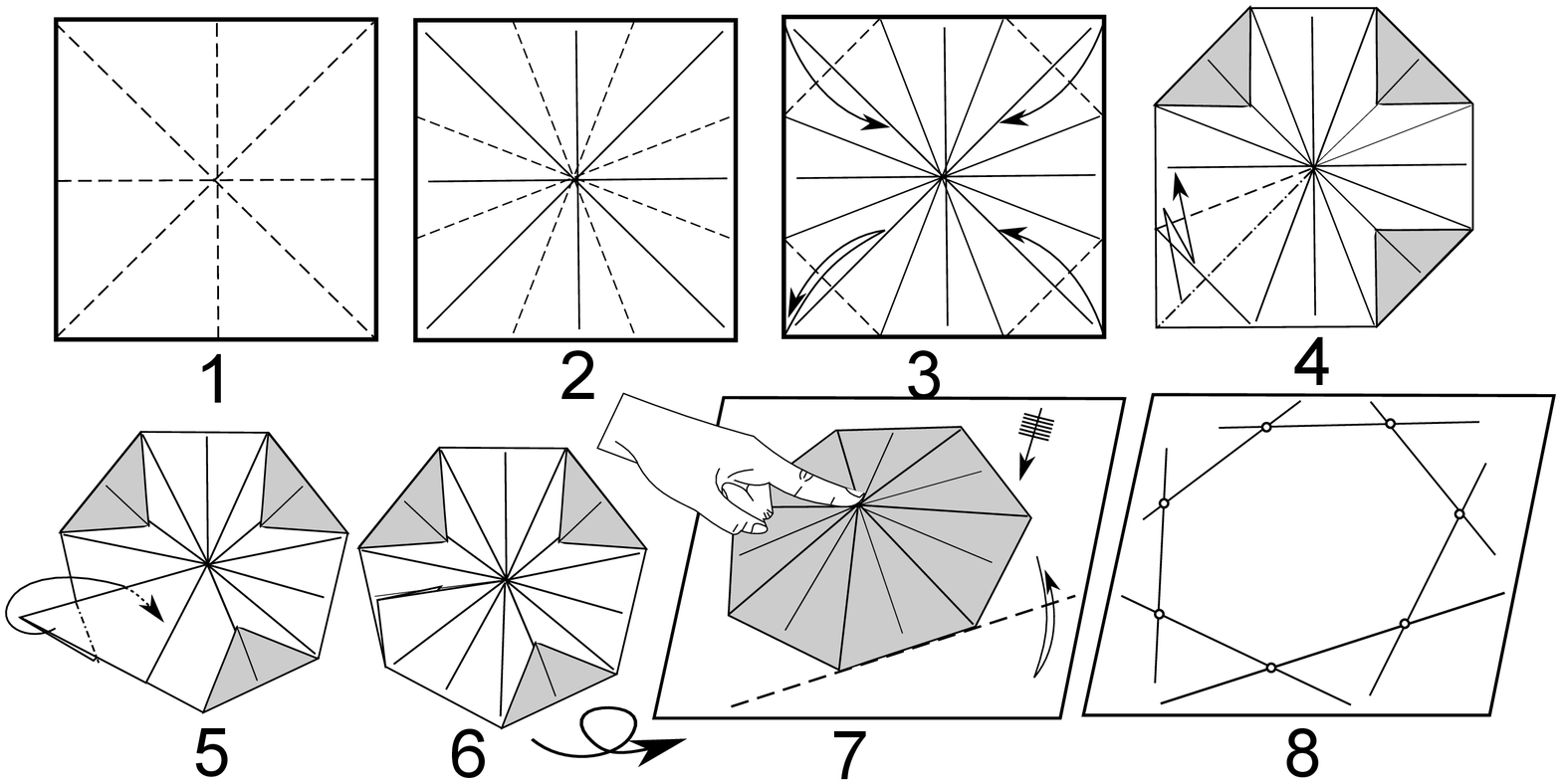}
\caption{usage of 3D to crease the sides of a regular heptagon}
\label{fig:heptagon}
\end{figure}

\begin{remark}
As a consequence of (\ref{torre}), every origami number belongs to a solvable extension of $\QQ$. Not all origami numbers have Abelian Galois group. Namely, the only real root $m$ of $x^3 - 2x - 2$, which is irreducible over $\QQ$ by Eisenstein's rule, is an origami number due to (\ref{torre}). By the following standard exercise in Galois Theory (\cite[p.139]{cox}), its Galois group is the non-Abelian symmetric group $S_3$.
\end{remark}

\begin{lemma}
\label{lema:symmetric}
Let $f\in\ZZ[x]$ be irreducible over $\QQ$ with $\mathop{\rm deg}(f)=p$ prime and exactly $p-2$ real roots. Then, its Galois group over $\QQ$ is the symmetric group $S_p$.
\end{lemma}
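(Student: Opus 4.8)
The plan is to analyze the Galois group $G = \mathrm{Gal}(L/\QQ)$ of the splitting field $L$ of $f$, regarded as a transitive subgroup of the symmetric group $S_p$ acting on the $p$ roots of $f$. First I would observe that since $f$ is irreducible of degree $p$, the group $G$ acts transitively on the $p$ roots, so $p \mid |G|$; by Cauchy's theorem $G$ contains an element of order $p$, which as a permutation of $p$ points must be a $p$-cycle (a permutation of prime order $p$ on $p$ letters has cycle type consisting of $\lfloor p/p\rfloor = 1$ cycle of length $p$). Thus $G$ contains a $p$-cycle.

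Next I would use the hypothesis that $f$ has exactly $p-2$ real roots, hence exactly two non-real roots, which are complex conjugates of each other. Complex conjugation restricts to an element $\tau \in G$, and as a permutation of the roots it fixes the $p-2$ real roots and swaps the conjugate pair of non-real roots; therefore $\tau$ is a transposition in $S_p$. So $G$ is a transitive subgroup of $S_p$ containing both a $p$-cycle and a transposition.

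The key group-theoretic fact, which is the crux of the argument, is that a subgroup of $S_p$ ($p$ prime) that contains a $p$-cycle and a transposition must be all of $S_p$. I would prove this as follows: let $\sigma$ be the $p$-cycle and $\tau = (a\ b)$ the transposition. Since $\sigma$ is a $p$-cycle and $p$ is prime, some power $\sigma^k$ sends $a$ to $b$ and is again a $p$-cycle; relabelling the roots cyclically so that this power is $(0\ 1\ 2\ \dots\ p-1)$ and $\tau = (0\ 1)$, the standard generators argument shows that conjugating $(0\ 1)$ repeatedly by the $p$-cycle produces all adjacent transpositions $(i\ i{+}1)$, and adjacent transpositions generate $S_p$. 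Hence $G = S_p$.

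The main obstacle is getting the reduction in the last step clean: a general $p$-cycle together with a general transposition need not be in "standard position," so one needs the prime-order observation that $\sigma$ generates a cyclic group acting transitively, allowing one to pick the right power of $\sigma$ carrying one point of the transposition's support to the other, and then a relabelling of $\{1,\dots,p\}$ that simultaneously normalizes both to the standard $(0\ 1\ \cdots\ p{-}1)$ and $(0\ 1)$. I expect everything else (transitivity, existence of the $p$-cycle via Cauchy, the transposition from complex conjugation) to be routine. Irreducibility of $f$ is used only to guarantee transitivity of $G$; the degree being prime is essential both for the cycle-type argument and for the final group theory (the statement fails for composite degree).
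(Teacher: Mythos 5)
Your proposal is correct and complete: transitivity plus Cauchy gives a $p$-cycle, the two non-real roots give a transposition via complex conjugation, and a $p$-cycle together with a transposition generate $S_p$ for $p$ prime. The paper does not prove this lemma itself but cites it as a standard exercise in Cox; your argument is exactly the standard one that citation refers to, so there is nothing to add.
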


\section{Origami folds behind the new 3D axioms}
\label{sec:pyramids}

In this section we present some easy 3D origami constructions which motivate the introduction of our new axioms.

Consider the folding sequence of Fig. \ref{fig:heptagon}. Steps 1-3 use the HJAs. In steps 4-6, no new line is constructed, but a pleat prevents the model from remaining flat. We get a flexible triangulated surface whose dihedral angles are not fixed. Now, if we put our finger in the apex and push gently against a flat surface, we get a right regular pyramid, which yields a physical origami realization of a regular heptagon in the plane, say a paper underneath. Folding that paper upwards along the sides of the pyramid we retrieve the result as a set of creases. In Fig. \ref{fig:tato8} we have a more artistic folding sequence rendering the same flexible surface, based on a traditional octagonal tato we learnt about thanks to the initial folds of \cite{flowertower}.

It is remarkable that, in both models, the side faces of the pyramid remain flat despite all the extra creases. This is a consequence of the classic Rigidity Theorem:

\begin{theorem}[Legendre-Cauchy]
\label{th:cauchy}
Any two convex polyhedra with the same graph and congruent corresponding faces are congruent.
\end{theorem}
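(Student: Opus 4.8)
The plan is to reproduce Cauchy's classical argument, with the combinatorial step handled in the careful form due to Steinitz. Let $P$ and $P'$ be two convex polyhedra sharing the same graph $G$ (embedded on the sphere, with a fixed isomorphism) and with congruent corresponding faces. It suffices to show that the dihedral angle of $P$ along each edge equals the dihedral angle of $P'$ along the corresponding edge: once all planar face angles agree (which they do, the faces being congruent) and all dihedral angles agree as well, one reconstructs both polyhedra face by face, starting from a chosen face and crossing one edge at a time, thereby building an isometry of $\RR^3$ carrying $P$ onto $P'$.

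Suppose, for contradiction, that some dihedral angles differ. Label each edge of $G$ with $+$, with $-$, or leave it unlabelled, according to whether the dihedral angle of $P$ along it is larger than, smaller than, or equal to that of $P'$; let $G^{\ast}$ be the resulting nonempty subgraph of labelled edges. The first key ingredient is the combinatorial lemma: for any sign labelling of the edges of a nonempty subgraph of a graph embedded in the sphere, some vertex incident to a labelled edge has the property that the cyclic sequence of labels around it exhibits at most two sign changes. This is proved by a counting argument: if every such vertex had at least four sign changes, then summing the number of sign changes over all vertices and comparing with an upper bound obtained from Euler's formula $V-E+F=2$ applied to $G^{\ast}$ leads to a contradiction; the delicate point, which Steinitz isolated and which invalidates the naive version of Cauchy's proof, is that $G^{\ast}$ may be disconnected and its complementary regions need not be disks, so one must argue component by component and bound the sign changes along the boundary of each face.

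The second key ingredient is Cauchy's arm lemma in its spherical form: if two convex spherical polygons have equal corresponding side lengths and, reading around the boundary, every angle of the second is at least the corresponding angle of the first, then the spherical distance between the two free endpoints is at least as large in the second polygon as in the first, with equality only when all angles coincide; this is established by induction on the number of sides. Apply it at a vertex $v$ supplied by the combinatorial lemma: intersecting $P$ and $P'$ with a small sphere centred at $v$ yields two convex spherical polygons whose side lengths are the face angles at $v$ (equal for $P$ and $P'$) and whose angles are the dihedral angles along the edges at $v$; convexity and perimeter less than $2\pi$ are automatic because $v$ is a vertex of a convex polyhedron. If all labelled edges at $v$ carried the same sign, the closed-polygon version of the arm lemma is already violated. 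If there are exactly two sign changes, split the cyclic list of edges at $v$ into two arcs, one bearing only $+$ (and unlabelled) edges and one bearing only $-$ (and unlabelled) edges, sharing their two endpoints; applying the arm lemma to each arc, the spherical distance between those two shared endpoints must strictly increase (from the $+$ arc) and simultaneously strictly decrease (from the $-$ arc), which is impossible. Hence all dihedral angles agree.

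The main obstacle is the combinatorial lemma and its careful bookkeeping of faces and components; by contrast, the arm lemma is a self-contained statement about convex spherical polygons, and the final face-by-face reconstruction of the congruence is routine.
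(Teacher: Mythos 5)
The paper does not prove this statement: it quotes the Legendre--Cauchy rigidity theorem as a classical result (used only to justify that the side faces of the folded pyramid stay flat), so there is no in-paper argument to compare yours against. Your outline is the standard and correct Cauchy--Steinitz proof: reduce congruence to equality of dihedral angles, sign-label the edges where they differ, invoke the combinatorial lemma that some vertex of the labelled subgraph sees at most two sign changes (with the Euler-formula count and the Steinitz caveat about disconnected $G^{\ast}$ and non-disk faces, which you correctly flag), and contradict it at that vertex via the spherical arm lemma applied to the vertex link, splitting into a $+$ arc and a $-$ arc in the two-sign-change case. All the ingredients and their interconnections are right, including the points that the link of a vertex of a convex polyhedron is a convex spherical polygon of perimeter less than $2\pi$ and that the zero-sign-change case is killed by the closed-polygon form of the arm lemma.

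The one place where your write-up is a sketch rather than a proof is the two lemmas themselves. The counting argument for the combinatorial lemma is routine once set up, but the arm lemma's ``induction on the number of sides'' is famously delicate: Cauchy's own induction for it was flawed (the intermediate polygon produced in the inductive step can fail to be convex), and a fully rigorous treatment needs either the Schoenberg--Zaremba continuity argument, Steinitz's repair, or Legendre's trigonometric approach. If this theorem were actually being proved in the paper rather than cited, that step would have to be filled in; as a reconstruction of the classical argument, your proposal is sound.
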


\begin{figure}
\psfrag{1}[][]{1}
\psfrag{2}[][]{2}
\psfrag{3}[][]{3}
\psfrag{4}[][]{4}
\psfrag{5}[][]{5}
\psfrag{stuff}[][]{Push to stuff inside}
\psfrag{refold}[][]{Refold tato}
\psfrag{open}[][]{Open tato}
\includegraphics[scale=0.35]{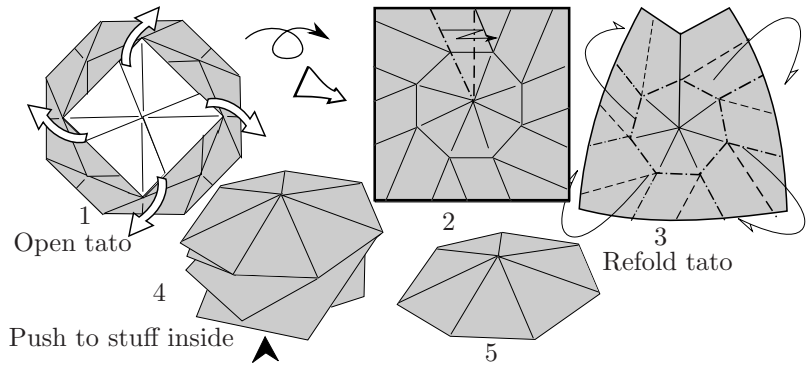}
\caption{alternative folding sequence using an octagonal tato}
\label{fig:tato8}
\end{figure}

 Notice that, for the sake of clarity, we have used two separate pieces of paper to show the construction, but it is not hard to get everything with just one sheet of paper (e.g. \cite{royo}, or the pyramidal eyes of \cite[step 65]{engel}).

The axiomatization of this construction can be done in many ways. One could be to develop a theory of constructible points in the three-dimensional space, describing every possible move with some axiom.
Instead, as our interest is focused on the arithmetic aspects of the construction, we have chosen to keep working in the plane, and define the axioms using an abstract input-output scheme.
In this case, the input of the axiom is the set of vertices of a regular $(n+1)$-gon, and its output is the set of lines defining the sides of a regular $n$-gon (see Fig. \ref{fig:axiomR}).

In the second axiom (see Fig. \ref{fig:axiomC}),
we drop regularity, just asking the starting polygon to be {\em cyclic}: its vertices belong to a circle.
Given a cyclic polygon with sides of lengths $a_1,\dots,a_{n+k}$, steps 4-8 of Fig. \ref{fig:heptagon} retrieve a cyclic polygon of side lengths $a_1,\dots,a_n$, where $k=1,2$ is the number of sides sacrificed by the pleat in step 4 (it is easy to pleat and lock more than one triangle).

We will prove the geometric validity of these constructions in Proposition \ref{prop:cuentaca}.

\section{Regular Polygon Axioms}
\label{sec:regular}

In the sequel, for the sake of brevity, we shall adopt the following convention.
\begin{description}
\item[Convention] Unless otherwise explicitly stated, we shall suppose that the vertices and side lengths of a polygon are listed in counterclockwise order.
\end{description}
Based on the first construction of the previous section, we present our first 3D axioms, the {\em Regular Polygon Axioms} (RPAs, for short). For $n\ge 3$, we have:
\begin{description}
\item[$\text{RPA}_n$] Given the vertices $A_1,\dots,A_{n+1}$ of a regular $(n+1)$-gon, we can fold the line containing any side of the regular $n$-gon of vertices $B_1,\dots,B_n$, determined by $B_1=A_1$ and $B_2=A_2$.
\end{description}

\begin{figure}
\psfrag{A1}[][]{$A_1$}
\psfrag{A2}[][]{$A_2$}
\psfrag{A3}[][]{$A_3$}
\psfrag{A4}[][]{$A_4$}
\psfrag{A5}[][]{$A_5$}
\psfrag{A6}[][]{$A_6$}
\psfrag{A7}[][]{$A_7$}
\psfrag{A8}[][]{$A_8$}
\psfrag{B3}[][]{$B_3$}
\psfrag{B4}[][]{$B_4$}
\psfrag{B5}[][]{$B_5$}
\psfrag{B6}[][]{$B_6$}
\psfrag{B7}[][]{$B_7$}
\includegraphics[scale=0.35]{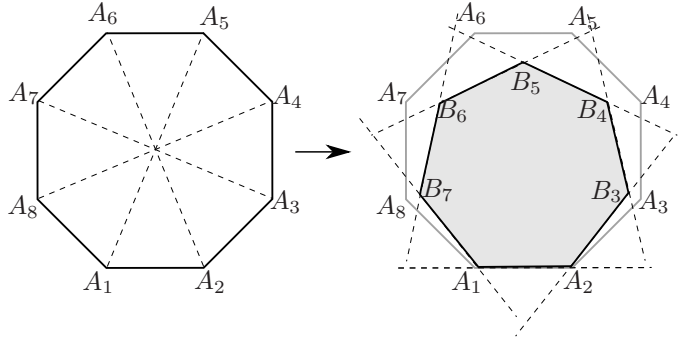}
\caption{Input and output of $\text{RPA}_7$}
\label{fig:axiomR}
\end{figure}

Now we describe the arithmetic consequences of the application of the RPAs.
\begin{lemma}
\label{lem:zeta}
Assume the notation in the definition of the axiom $\text{RPA}_n$, and let $K$ be a number field containing $A_1,\dots,A_{n+1}$. Then, we have
$$
K(B_1,\dots,B_n)=K(\zeta_n)
$$
\end{lemma}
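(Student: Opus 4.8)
The strategy is to identify both the regular $(n{+}1)$-gon and the regular $n$-gon with explicit configurations built from roots of unity, placed so that the common side $A_1A_2=B_1B_2$ is the same segment in the plane. First I would normalize: using the field operations available from the HJAs, I may assume $A_1$ and $A_2$ are whatever constructible points I like, since $K$ already contains them; a convenient choice is to take the common edge to lie on a fixed segment, say from the origin along the real axis, or better, to absorb the normalization into $K$. The key observation is that the circumcircle of a regular $m$-gon with a prescribed edge has center and radius lying in $K(\zeta_m)$ (the radius is $\tfrac{|A_1A_2|}{2\sin(\pi/m)}$ and $\sin(\pi/m)\in\QQ(\zeta_m)$), and conversely the vertices of the $m$-gon are obtained from two consecutive ones by repeatedly rotating through the angle $2\pi/m$, whose rotation matrix has entries $\cos(2\pi/m),\sin(2\pi/m)\in\QQ(\zeta_m)\cap\RR$. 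Hence I would first show the inclusion $K(B_1,\dots,B_n)\subseteq K(\zeta_n)$: the vertices $B_j$ are obtained from $B_1=A_1,B_2=A_2\in K$ by rotations about the (computable) circumcenter through multiples of $2\pi/n$, so each $B_j$ has coordinates in $K(\cos(2\pi/n),\sin(2\pi/n))\subseteq K(\zeta_n)$.

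For the reverse inclusion $K(\zeta_n)\subseteq K(B_1,\dots,B_n)$, the plan is to reconstruct $\zeta_n$ from the coordinates of the $B_j$. Given two consecutive vertices $B_1,B_2$ and a third $B_3$, the rotation taking the edge $B_1B_2$ to the edge $B_2B_3$ is rotation by the exterior angle $2\pi/n$; concretely, $\zeta_n = (B_3-B_2)/(B_2-B_1)$ up to a known unit factor, or one extracts $\cos(2\pi/n)$ as a ratio of inner products of the edge vectors $B_2-B_1$ and $B_3-B_2$, all of which are rational functions of the real and imaginary parts of the $B_j$, hence lie in $K(B_1,\dots,B_n)$. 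Once $\cos(2\pi/n)$ and $\sin(2\pi/n)$ are available, so is $\zeta_n=\cos(2\pi/n)+i\sin(2\pi/n)$, provided $i\in K(B_1,\dots,B_n)$ — and $i$ is available because $K\supseteq\QQ(i)$ (the origami field $\OO$, hence any $K$ we care about, contains $i$; alternatively one notes $K$ is closed under complex conjugation, which is the standard setup for origami numbers, so taking real and imaginary parts is legitimate). This proves $K(\zeta_n)\subseteq K(B_1,\dots,B_n)$ and completes the equality.

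The main obstacle I anticipate is purely bookkeeping of the base field: one must be careful that $K$ really does contain enough to let us (i) take real and imaginary parts of points and (ii) have $i$ at hand, since the two halves of the argument tacitly use this. In the context of this paper $K$ is always a subfield of $\OO$ built from $\{0,1\}$ via the HJAs, and $\OO$ is closed under complex conjugation, so $i=\sqrt{-1}\in K$ once we are past the trivial cases — but I would state this hypothesis explicitly (or reduce to it) rather than leave it implicit. A secondary, minor point is that the definition of $\mathrm{RPA}_n$ outputs the \emph{lines} containing the sides of the $n$-gon rather than the vertices $B_j$ themselves; so strictly one should note that the $B_j$ are recovered from those lines by axiom LI (consecutive side-lines meet at a vertex), which is why writing $K(B_1,\dots,B_n)$ is legitimate and matches what the axiom produces. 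Everything else is a routine trigonometric computation that I would not grind through in detail.
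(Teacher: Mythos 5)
Your overall strategy is the paper's: the forward inclusion comes from the fact that the $B_j$ are obtained from $B_1=A_1,B_2=A_2$ by rotation about the circumcenter through powers of $\zeta_n$, and the reverse inclusion comes from recovering $\zeta_n$ as a ratio built out of the $B_j$. Your remark that $\text{RPA}_n$ outputs side-lines and that LI recovers the vertices is also a fair and correct point. However, the way you execute the forward inclusion contains a genuine error: you pass to real coordinates and claim $\cos(2\pi/m),\sin(2\pi/m)\in\QQ(\zeta_m)$ and $\sin(\pi/m)\in\QQ(\zeta_m)$, concluding $K(B_1,\dots,B_n)\subseteq K(\cos(2\pi/n),\sin(2\pi/n))\subseteq K(\zeta_n)$. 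The cosine claim is true, but the sine claims are false in general: for $n=3$ one has $\QQ(\zeta_3)=\QQ(\sqrt{-3})$, which does not contain $\sin(2\pi/3)=\sqrt{3}/2$ (likewise $\QQ(\zeta_4)=\QQ(i)$ does not contain $\sin(\pi/4)=\sqrt{2}/2$). In general $\sin(2\pi/n)=(\zeta_n-\zeta_n^{-1})/(2i)$ lies in $\QQ(\zeta_n)$ only when $i$ does. For the same reason, your appeal to $i\in K$ (or to $K$ being conjugation-closed) in the reverse direction is an extra hypothesis not present in the lemma, which only assumes $K$ is a number field containing the $A_j$.

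All of this is repaired, with no change to the underlying idea, by never leaving $\CC$: a counterclockwise rotation by $2\pi/n$ about $C$ is $z\mapsto C+\zeta_n(z-C)$, so $B_i=C+\zeta_n^{i-1}(B_1-C)$, and $C$ itself is pinned down by $A_2-C=\zeta_n(A_1-C)$, i.e.\ $C=(A_2-\zeta_nA_1)/(1-\zeta_n)\in K(\zeta_n)$; this gives $K(B_1,\dots,B_n)\subseteq K(\zeta_n)$ with no sines, square roots or $i$ anywhere. (Your alternative route to the circumcircle via the radius $|A_1A_2|/(2\sin(\pi/n))$ is doubly problematic, since $|A_1A_2|$ need not lie in $K$ either.) For the converse, your formula $\zeta_n=(B_3-B_2)/(B_2-B_1)$ is in fact exact --- no unit factor --- and immediately gives $\zeta_n\in K(B_1,\dots,B_n)$ as a quotient of complex numbers, again with no need for $i$ or for real and imaginary parts; the paper instead computes $C=(B_1+\dots+B_n)/n$ and $\zeta_n=(B_2-C)/(B_1-C)$, which is equivalent. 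So the proof is salvageable by sticking to the complex formulation you mention in passing, but as written the containment $K(\cos(2\pi/n),\sin(2\pi/n))\subseteq K(\zeta_n)$ on which your forward inclusion rests is false.
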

\begin{proof}
Let $C$ be
the circumcenter of the resulting polygon.
We have:
\begin{equation}
\label{eq:generandoBi}
B_i-C=(\zeta_n)^{i-1}(B_1-C),\quad\forall i=2,\dots,n
\end{equation}
which gives $K(B_1,\dots,B_n)\subset K(B_1,B_2,\zeta_n,C)=K(\zeta_n)$, because $B_i=A_i\in K$ for $i=1,2$ and $C=(A_2-A_1\zeta_n)/(1-\zeta_n)$. For the converse, as
 $C=(B_1+\dots+B_n)/n$, we have $C\in K(B_1,\dots,B_n)$. Now, $\zeta_n=(B_2-C)/(B_1-C)$ yields $K(\zeta_n)=K(\zeta_n,C)\subset K(B_1,\dots,B_n)$ and we are done.
\end{proof}

\begin{remark}\label{rem:newline}
In the conditions of lemma \ref{lem:zeta}, the coefficients and slope $m$ of the Cartesian equation of the lines containing the sides of $P$ are easily constructed from $K(\zeta_n)$ as described in the proof of Lemma \ref{lem:cubica}. Thus, they belong to some 2-extension of $K(\zeta_n)$.
\end{remark}

\begin{definition}
We say that $\alpha\in\CC$ is a {\em regular origami number} if there is a finite sequence of the HJAs and the RPAs that starts with $\{0,1\}$ and ends in $\alpha$. We denote the set of all regular origami numbers as $\OO_{RP}$.
\end{definition}

\begin{lemma}
\label{lem:todaszetas}
The field $\OO_{RP}$ contains $\zeta_n$, for every $n\in\NN$.
\end{lemma}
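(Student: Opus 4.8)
The plan is to prove the statement by strong induction on $n$, showing that $\zeta_n \in \OO_{RP}$ for every $n \in \NN$. The base cases $n=1$ and $n=2$ are trivial, since $\zeta_1 = 1$ and $\zeta_2 = -1$ both lie in the field generated by $\{0,1\}$ already. For the inductive step, assume $\zeta_m \in \OO_{RP}$ for all $m < n$; I want to produce $\zeta_n$. The idea is to feed a regular $(n+1)$-gon into $\text{RPA}_n$ and read off $\zeta_n$ from its output via Lemma~\ref{lem:zeta}.

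First I would check that a regular $(n+1)$-gon with vertices in $\OO_{RP}$ is constructible, so that $\text{RPA}_n$ is applicable. By the inductive hypothesis, $\zeta_{n+1} \in \OO_{RP}$, and since $\OO_{RP}$ contains $\OO$ it is closed under field operations; hence the points $C + \rho\,\zeta_{n+1}^{\,k}$ for $k = 0, \dots, n$ (for any constructible center $C$ and radius $\rho$, e.g. $C=0$, $\rho=1$) all lie in $\OO_{RP}$ and form the vertices $A_1, \dots, A_{n+1}$ of a regular $(n+1)$-gon. Applying $\text{RPA}_n$ to these vertices yields the lines containing the sides of the regular $n$-gon with $B_1 = A_1$, $B_2 = A_2$. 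By Remark~\ref{rem:newline}, the coefficients and slopes of these Cartesian equations are constructible from the lines returned by the axiom using the HJAs, and intersecting consecutive side-lines via LI recovers the vertices $B_1, \dots, B_n$ themselves; so $B_1, \dots, B_n \in \OO_{RP}$. Taking $K = \OO_{RP}$ (which is a number field containing $A_1, \dots, A_{n+1}$) in Lemma~\ref{lem:zeta} gives $\OO_{RP}(\zeta_n) = \OO_{RP}(B_1,\dots,B_n) = \OO_{RP}$, whence $\zeta_n \in \OO_{RP}$, completing the induction.

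The one point that needs a little care — the main obstacle, though a mild one — is the extraction of the points $B_i$ from the \emph{lines} that $\text{RPA}_n$ outputs: the axiom as stated returns side-lines, not vertices, so one must argue (as in Remark~\ref{rem:newline} together with LI) that intersecting adjacent output lines is a legitimate HJA move and returns the $B_i$. Once that is granted, everything else is bookkeeping: the inductive hypothesis supplies the input polygon, the axiom supplies the output, and Lemma~\ref{lem:zeta} converts the output back into the desired root of unity. Note the mild circularity-avoidance here: we use $\zeta_{n+1}$ (known by induction) to build the input $(n+1)$-gon, and get $\zeta_n$ out — so the induction runs in the correct direction and never presupposes what it is trying to prove.
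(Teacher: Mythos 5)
There is a genuine gap, and it is fatal to the argument as written: your induction runs in the wrong direction. You set up strong induction on $n$ with the hypothesis that $\zeta_m\in\OO_{RP}$ for all $m<n$, but the very first step of the inductive construction invokes ``by the inductive hypothesis, $\zeta_{n+1}\in\OO_{RP}$''. Since $n+1>n$, this is not covered by that hypothesis. The problem is structural, not cosmetic: $\text{RPA}_n$ consumes a regular $(n+1)$-gon and produces a regular $n$-gon, so it only ever lets you pass from a \emph{larger} root of unity to a \emph{smaller} one. An induction that tries to manufacture $\zeta_n$ out of the $\zeta_m$ with $m<n$ therefore has nothing to feed into the axiom, and chasing the requirement upward ($\zeta_n$ needs $\zeta_{n+1}$, which needs $\zeta_{n+2}$, \dots) never terminates. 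Your closing remark that ``the induction runs in the correct direction'' is precisely the point at which the argument breaks.

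The repair is to anchor the descent from above, which is what the paper does: given $n$, choose $k$ with $2^k>n+1$; a regular $2^k$-gon is constructible with the HJAs alone (repeated angle bisection followed by folding perpendiculars), so its vertices lie in $\OO_{RP}$ without any appeal to the statement being proved. Now apply $\text{RPA}_{2^k-1}$, $\text{RPA}_{2^k-2}$, \dots, $\text{RPA}_n$ in succession, losing one side at a time, until a regular $n$-gon is reached; Lemma~\ref{lem:zeta} at the last step gives $\zeta_n\in\OO_{RP}$. Your subsidiary observations --- that the axiom returns side-lines rather than vertices, that the $B_i$ are recovered by intersecting consecutive lines via LI and Remark~\ref{rem:newline}, and that Lemma~\ref{lem:zeta} applied over the field generated by the input vertices converts the output polygon into the desired root of unity --- are all correct and would slot into this corrected argument unchanged.
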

\begin{proof}
Fix $n\ge 3$, and take $k>\log_2(n+1)$. Divide the square into $2^{k+1}$ equal angles using bisections, like in step 2 of Fig. \ref{fig:heptagon}. Folding perpendiculars, we obtain a regular polygon of $2^k$ sides whose vertices belong to the final field $F$ of a 2-3 tower, by \eqref{torre}. We get
a new regular polygon of $m=2^{k}-1>n$ sides from $\text{RPA}_m$. Its vertices belong to $F(\zeta_{m})$ by lemma \ref{lem:zeta}. Applying  the RPAs successively, we get regular polygons of  $m,m-1,\dots,n$ sides, thus proving that $F(\zeta_{n})\subset\OO_{RP}$.
\end{proof}

Denote by $\QQ_{Ab}$ the maximal Abelian extension of $\QQ$. Now, we recall:

\begin{theorem}[Kronecker-Weber]\label{thm:kronecker}
Every finite Abelian extension $K\subset\CC$ of $\mathbb{Q}$ is a subfield of a cyclotomic field (i.e., $K\subset\QQ(\zeta_k)$, for some $k\in\NN$).
\end{theorem}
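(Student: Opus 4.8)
The plan is to treat Kronecker--Weber as the classical deep theorem it is and to invoke it as a black box in the paper; for completeness I sketch the shape a proof would take. The first move is a purely algebraic reduction. Since every finite abelian group is a direct product of cyclic groups of prime power order, Galois theory writes $K$ as a compositum $K=K_1\cdots K_r$ with each $K_i/\QQ$ cyclic of degree $\ell_i^{m_i}$; and since $\QQ(\zeta_a)\QQ(\zeta_b)=\QQ(\zeta_{\mathrm{lcm}(a,b)})$, a compositum of cyclotomic fields is again cyclotomic. Hence it suffices to embed each cyclic $K_i$ into some $\QQ(\zeta_n)$, and we may assume from now on that $K/\QQ$ is cyclic of prime power degree.

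Next I would pass to the local picture. Let $q_1,\dots,q_s$ be the finitely many primes ramifying in $K$. For each $q_j$ choose a prime of $K$ above it and complete, obtaining an abelian extension of $\QQ_{q_j}$. The heart of the matter is the \emph{local Kronecker--Weber theorem}: every finite abelian extension of $\QQ_q$ is contained in $\QQ_q(\zeta_N)$ for some $N$ (equivalently, $\QQ_q^{\mathrm{ab}}$ is generated over $\QQ_q$ by all roots of unity). Granting this, one reads off an exponent $b_j$ from the local extension at each $q_j$ and assembles $n=\prod_j q_j^{b_j}$ so that, prime by prime, the ramification that $K$ contributes at $q_j$ is already realized inside $\QQ(\zeta_n)$.

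To finish, set $L=K\,\QQ(\zeta_n)$, an abelian extension of $\QQ$ with group $G$. By the choice of $n$, $L/\QQ(\zeta_n)$ is unramified at every finite prime (and trivially at the infinite places, $\QQ(\zeta_n)$ being totally complex for $n>2$); restricting inertia along $\mathrm{Gal}(L/\QQ)\to\mathrm{Gal}(\QQ(\zeta_n)/\QQ)$ then identifies $I_{q_j}(L/\QQ)$ with $I_{q_j}(\QQ(\zeta_n)/\QQ)\cong(\ZZ/q_j^{b_j}\ZZ)^\times$, so $|I_{q_j}(L/\QQ)|=\varphi(q_j^{b_j})$. The subgroup of $G$ generated by all the inertia subgroups has fixed field an everywhere unramified extension of $\QQ$, hence equals $\QQ$ by Minkowski's theorem that $|\mathrm{disc}\,F|>1$ for $F\neq\QQ$; therefore the $I_{q_j}(L/\QQ)$ generate $G$ and $|G|\le\prod_j\varphi(q_j^{b_j})=\varphi(n)=[\QQ(\zeta_n):\QQ]$. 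Since $L\supseteq\QQ(\zeta_n)$, this forces $L=\QQ(\zeta_n)$, i.e.\ $K\subseteq\QQ(\zeta_n)$.

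The main obstacle is the local Kronecker--Weber statement itself. The unramified and tamely ramified parts are easy, but placing a wildly ramified cyclic $q$-extension of $\QQ_q$ inside $\QQ_q(\zeta_{q^k})$ requires genuine work — classically a ramification-filtration computation in the sense of Hilbert and Speiser, or, conceptually, an immediate consequence of local class field theory (or of Lubin--Tate theory). As none of this is used in detail in what follows, in the paper we simply cite the theorem.
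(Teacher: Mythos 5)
The paper states Kronecker--Weber as a celebrated classical result and gives no proof, which is exactly what you propose to do, so your approach matches the paper's. Your supplementary sketch (reduction to cyclic extensions of prime-power degree, the local Kronecker--Weber theorem, and the inertia/Minkowski argument forcing $K\,\QQ(\zeta_n)=\QQ(\zeta_n)$) is the standard and correct outline of the proof.
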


As a consequence of the celebrated theorem \ref{thm:kronecker}, $\QQ_{Ab}$ is the smaller extension of $\QQ$ that contains all the roots of unity. So, by
 Lemma \ref{lem:todaszetas}, we have that $\OO_{RP}$ is an extension of $\QQ_{Ab}$.
Now we present the main result of this section:

\begin{theorem}
$\OO_{RP}$ is the smallest subfield of $\CC$ containing $\QQ_{Ab}$ and which is closed under the operations of taking square roots, cubic roots and complex conjugation. Moreover, $\alpha\in\OO_{RP}$ if and only if there exists a tower
\begin{equation}
\label{eq:tower}
\mathbb{Q}=F_0\subset\dots\subset F_n
\end{equation}
such that $\alpha\in F_n$ and, for $i=0,\dots,n-1$,
either
$2\le[F_{i+1}:F_{i}]\le 3$
or
$F_{i+1}=F_i(\zeta_k)$ for some $k\in\NN$
(we call such a tower a 2-3-c tower; c for cyclotomic).
\end{theorem}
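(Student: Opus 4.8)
The plan is to run the same argument that characterizes $\OO$, but with an extra ``cyclotomic'' move available at every stage. Write $L$ for the set of all $\alpha\in\CC$ lying in the top field $F_n$ of some 2-3-c tower $\QQ=F_0\subset\cdots\subset F_n$. I will show that $\OO_{RP}=L$ and that $L$ is the smallest subfield of $\CC$ containing $\QQ_{Ab}$ and closed under square roots, cube roots and complex conjugation; together these yield both assertions.

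\emph{Step 1: $L$ is a field with the required closure, and $\QQ_{Ab}\subseteq L$.} If $\alpha$ lies in a tower $(F_i)_{i\le n}$ and $\beta$ in a tower $(G_j)_{j\le m}$, then the concatenation $\QQ=F_0\subset\cdots\subset F_n=F_nG_0\subset F_nG_1\subset\cdots\subset F_nG_m$, using the composita $F_nG_j$, is again a 2-3-c tower: each composite step has degree at most that of the corresponding step of $(G_j)$, and $F_nG_j(\zeta_k)=F_nG_{j+1}$ whenever $G_{j+1}=G_j(\zeta_k)$. As it contains $\alpha$ and $\beta$, the set $L$ is a subfield of $\CC$. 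Appending one degree-$2$ (resp. degree-$3$) step shows $L$ is closed under square (resp. cube) roots. Applying complex conjugation termwise to a tower gives $\overline{F_0}\subset\cdots\subset\overline{F_n}$, again 2-3-c since conjugation is a field automorphism of $\CC$ fixing $\QQ$ with $\overline{\zeta_k}=\zeta_k^{-1}$; hence $L$ is closed under conjugation. Finally $\QQ\subset\QQ(\zeta_k)$ is itself a 2-3-c tower, so $\zeta_k\in L$ for all $k$, and Theorem~\ref{thm:kronecker} gives $\QQ_{Ab}\subseteq L$.

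\emph{Step 2: $\OO_{RP}\subseteq L$ and $\OO_{RP}\subseteq M$ for every competitor $M$.} Let $X$ be either $L$ or any subfield $M\subseteq\CC$ with $\QQ_{Ab}\subseteq M$ that is closed under square roots, cube roots and conjugation. Since $\OO_{RP}$ is by definition the smallest set containing $\{0,1\}$ and stable under the HJA and RPA moves, it suffices to check that $X$ has those two stability properties. For the HJAs this is the classical analysis underlying Lemma~\ref{lem:cubica}: the moves O1--O5 and O7 (and LI) are of ruler-and-compass type, while O6 solves a cubic, so each move enlarges the current field only by roots of quadratics and cubics over it, and a field closed under square and cube roots contains all such roots (adjoin one root, then the square root of the discriminant). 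For $\text{RPA}_n$: by Lemma~\ref{lem:zeta} and Remark~\ref{rem:newline}, if the input vertices lie in a subfield $K$ of $X$ then the output lines have coefficients in a quadratic extension of $K(\zeta_n)$, and $\zeta_n\in X$ (for $X=L$ by Step 1, for $X=M$ because $\zeta_n\in\QQ_{Ab}\subseteq M$), so the output stays in $X$. Hence $\OO_{RP}\subseteq X$.

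\emph{Step 3: $L\subseteq\OO_{RP}$, and conclusion.} Given $\alpha\in F_n$ inside a 2-3-c tower, I prove $F_i\subseteq\OO_{RP}$ by induction on $i$, starting from $F_0=\QQ\subseteq\OO_{RP}$. If $[F_{i+1}:F_i]=2$, then $F_{i+1}=F_i(\sqrt d)$ with $d\in F_i\subseteq\OO_{RP}$, so $F_{i+1}\subseteq\OO_{RP}$ since ruler-and-compass constructions are available. If $[F_{i+1}:F_i]=3$, pick $\beta\in F_{i+1}\setminus F_i$; then $F_{i+1}=F_i(\beta)$ and $\beta$ is a root of an irreducible cubic over the number field $F_i$, so $\beta\in\OO_{RP}$ by Lemma~\ref{lem:cubica}. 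If $F_{i+1}=F_i(\zeta_k)$, then $\zeta_k\in\OO_{RP}$ by Lemma~\ref{lem:todaszetas}. In every case $F_{i+1}\subseteq\OO_{RP}$, hence $\alpha\in\OO_{RP}$. Together with Step 2 this gives $\OO_{RP}=L$, which is exactly the 2-3-c tower description; moreover $\OO_{RP}=L$ contains $\QQ_{Ab}$, is closed under square roots, cube roots and conjugation (Step 1), and is contained in every field $M$ with those properties (Step 2), so it is the smallest such field. The one delicate point is Step 1 --- that the union of 2-3-c towers is genuinely a field and is closed under conjugation, which rests on concatenating and conjugating towers so that every step stays of one of the two permitted types, together with the observation that a degree-$3$ step need not be a pure cube-root extension but is nonetheless captured by the ``splits every cubic'' property of a field closed under square and cube roots; the remainder is bookkeeping inherited from the characterization of $\OO$.
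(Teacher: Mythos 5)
Your proof is correct and follows essentially the same route as the paper's: both directions rest on Lemmas~\ref{lem:cubica}, \ref{lem:zeta}, \ref{lem:todaszetas} and Remark~\ref{rem:newline}, with the tower-to-number direction done step by step up the tower and the number-to-tower direction done by induction on the axioms applied (your ``stability'' formulation) followed by juxtaposition of towers. Your only departure is organizational --- introducing the set $L$ of tower-reachable numbers and verifying its field and closure properties directly (concatenation via composita, termwise conjugation) rather than citing the arguments of Cox and Alperin --- which makes the write-up more self-contained but not different in substance.
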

\begin{proof}
For the first part of the statement we adapt the argument of \cite[Theorem 4.2]{alperin}.
By Lemmas \ref{lem:cubica} and \ref{lem:todaszetas}, the minimal subfield satisfying the conditions of the statement must be a subfield of $\OO_{RP}$. The converse follows from the fact that the application of the HJAs just involve field operations or solving quadratic or cubic equations, and from Lemma \ref{lem:zeta}.

For the second part of the statement, we proceed as in \cite[Theorem 10.3.4]{cox}.
If $\alpha\in F_n$ belongs to tower \eqref{eq:tower} , then Lemma \ref{lem:cubica} permits us to solve each cubic or quadratic equation involved, and the RPAs allow us to adjoin any needed root of unity.
Conversely, given a number $\alpha\in\OO_{RP}$, we can proceed by induction on the number of axioms applied to construct $\alpha$. Notice that in order to apply the proof of \cite[Theorem 10.3.4]{cox} we just need to show that the coefficients and slopes of the lines involved in the application of the last used axiom belong to a 2-3-c tower,and this follows from Remark \ref{rem:newline} in the case of the RPAs and from the arguments given in \cite[Theorem 10.3.4]{cox} in the case of the other axioms. By juxtaposition with the 2-3-c tower given by the induction argument, we get a 2-3-c tower. \end{proof}

\begin{corollary}
\label{cor}
Every regular origami number has solvable Galois group.
\end{corollary}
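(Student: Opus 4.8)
The plan is to read off the corollary from the $2$-$3$-c tower characterization of $\OO_{RP}$ in the theorem just proved, via the standard principle that each elementary step of such a tower can be absorbed into a Galois extension of $\QQ$ with solvable Galois group. Precisely, I would first establish the auxiliary claim: \emph{if $\QQ=F_0\subset\dots\subset F_n$ is a $2$-$3$-c tower, then $F_n$ is contained in a field $L$ with $L/\QQ$ Galois and $\mathrm{Gal}(L/\QQ)$ solvable.} Granting the claim, take $\alpha\in\OO_{RP}$; by the theorem $\alpha\in F_n$ for some such tower, so $\alpha\in L$. Since $L/\QQ$ is normal, $L$ contains the splitting field $N$ of the minimal polynomial of $\alpha$ over $\QQ$, and hence the Galois group of $\alpha$, namely $\mathrm{Gal}(N/\QQ)\cong\mathrm{Gal}(L/\QQ)/\mathrm{Gal}(L/N)$, is a quotient of a solvable group and therefore solvable.

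I would prove the claim by induction on the length $n$, the case $n=0$ being trivial. Suppose $F_{n-1}\subset L$ with $L/\QQ$ Galois and $\mathrm{Gal}(L/\QQ)$ solvable, and write $F_n=F_{n-1}(\beta)$. If $F_n=F_{n-1}(\zeta_k)$, set $L'=L(\zeta_k)$. This is the compositum of the Galois extensions $L/\QQ$ and $\QQ(\zeta_k)/\QQ$, so $L'/\QQ$ is Galois, and $\mathrm{Gal}(L'/\QQ)$ embeds into $\mathrm{Gal}(L/\QQ)\times\mathrm{Gal}(\QQ(\zeta_k)/\QQ)$; the second factor is isomorphic to a subgroup of $(\ZZ/k\ZZ)^{\times}$, hence abelian, so the product, and therefore $\mathrm{Gal}(L'/\QQ)$, is solvable, while $F_n\subset L'$.

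If instead $[F_n:F_{n-1}]\le 3$, let $h\in L[x]$ be the minimal polynomial of $\beta$ over $L$; since $F_{n-1}\subset L$, $h$ divides the minimal polynomial of $\beta$ over $F_{n-1}$, so $\deg h\le 3$. Put $p(x)=\prod_{\sigma\in\mathrm{Gal}(L/\QQ)}\sigma(h)(x)$, with $\sigma$ acting on coefficients; as $\mathrm{Gal}(L/\QQ)$ permutes the factors, $p\in\QQ[x]$. Let $L'$ be the splitting field of $p$ over $L$; it is the compositum of $L$ with the splitting field of $p$ over $\QQ$, both Galois over $\QQ$, so $L'/\QQ$ is Galois, and $\beta\in L'$ as a root of $h\mid p$, whence $F_n\subset L'$. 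Since $L/\QQ$ is Galois, $\mathrm{Gal}(L'/L)\trianglelefteq\mathrm{Gal}(L'/\QQ)$ with quotient $\mathrm{Gal}(L/\QQ)$, which is solvable, so it suffices to check that $\mathrm{Gal}(L'/L)$ is solvable. But $L'$ is the compositum over $L$ of the splitting fields of the polynomials $\sigma(h)\in L[x]$, each of degree $\le 3$ and hence with Galois group over $L$ isomorphic to a subgroup of $S_3$; since the Galois group of a compositum of Galois extensions of $L$ injects into the product of the individual Galois groups, $\mathrm{Gal}(L'/L)$ is solvable. Hence $\mathrm{Gal}(L'/\QQ)$ is solvable, completing the induction.

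The routine inputs are the closure of the class of solvable groups under subgroups, quotients, finite products, and group extensions, together with the elementary facts about compositums of Galois extensions. The one point that needs genuine care is that the degree-$2$ and degree-$3$ steps of a $2$-$3$-c tower need not themselves be normal extensions; this is exactly why the argument replaces the cubic $h$ by its $\mathrm{Gal}(L/\QQ)$-orbit product $p\in\QQ[x]$ and passes to a splitting field, converting a possibly non-normal cubic step into a normal step whose Galois group is a subquotient of a product of copies of $S_3$. I expect this bookkeeping — ensuring at every stage that the enlarged field is Galois over $\QQ$, not merely over $F_{n-1}$ — to be the main (though modest) obstacle, the rest being formal.
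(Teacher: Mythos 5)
Your proof is correct and follows essentially the same route as the paper: both deduce the corollary from the $2$-$3$-c tower characterization of $\OO_{RP}$ together with the fact that a tower of solvable (degree $\le 3$ or cyclotomic) steps lands inside a solvable Galois extension of $\QQ$. The paper compresses this into the one-line appeal that cyclotomic extensions are radical and that ``the juxtaposition of solvable extensions is solvable,'' whereas you carry out the Galois-closure bookkeeping for the possibly non-normal quadratic and cubic steps explicitly; that is exactly the content hidden in the paper's cited standard fact.
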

\begin{proof}
Every $\alpha\in\OO_{RP}$ belongs to a 2-3-c tower \eqref{eq:tower}. As every extension $F(\zeta_k)/F$ is radical (thus, solvable) and the juxtaposition of solvable extensions is solvable, we have the result.
\end{proof}

\section{Cyclic Polygon Axioms}
\label{sec:cyclic}

In this section, we generalize the RPAs by starting with a cyclic polygon, not necessarily regular. We first recall some known facts about cyclic polygons.

\subsection{Cyclic polygons}
\label{subsec:cyclic}

Recall that a {\em cyclic} polygon is a polygon whose vertices belong to a circle. Let's see that for positive numbers $a_1,\dots,a_n$ satisfying
\begin{equation}\label{eq:condition}
2\left(\max\limits_{1\le i\le n} a_i\right)\le \sum\limits_{1\le i\le n}a_i,
\end{equation}
there exists a unique convex cyclic polygon with sides in that order. Following \cite[Section 1]{pak}, take a circle with large enough radius $r$, so that placing vertices on it at distances $a_1,\dots,a_n$ in that order, we get an open polygonal. Shrinking $r$ until the polygonal is closed, we get a convex polygon due to condition \eqref{eq:condition}.
The radius of the circumscribed circle is called the {\em circumradius} of the cyclic polygon.

The circumradius, the area and the lengths of the diagonals of cyclic polygons have been object of active research in the last two decades (see \cite{pak} for a survey). They, or their squares, are roots of polynomials with coefficients in $\QQ(a_1^2,\dots,a_n^2)$. Those polynomials are difficult to compute, but explicit formulas can be found for cyclic pentagons, hexagons and heptagons, of degrees 7, 14 and 38, respectively (see  \cite{robbins},\cite{sabitov}, \cite{varfolomeev1} and  \cite{moritsugu}). General formulas are, nowadays, computationally intractable as the degree and the number of summands quickly becomes gargantuan.

\subsection{Validity of the origami constructions}
In this subsection, we prove the geometric legitimacy of the origami constructions of section \ref{sec:pyramids}.

\begin{proposition}
\label{prop:cuentaca}
Consider a flexible polyhedral surface $S$ imbedded in $\RR^3$, formed by a closed circuit of isosceles triangles, glued along their equal sides around some point $O$. Assume that the sum of the angles at $O$ is less than $2\pi$. Then there exists only one polygon that, glued to $S$ by the boundary, forms a pyramid.
\end{proposition}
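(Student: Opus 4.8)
The plan is to reduce the statement to the Legendre–Cauchy rigidity theorem (Theorem \ref{th:cauchy}) after first establishing \emph{existence} of a suitable capping polygon, and then deduce \emph{uniqueness} from rigidity. First I would set up coordinates: write $a_1,\dots,a_n$ for the base edges of the triangles (the sides opposite to $O$), and let $\ell_1,\dots,\ell_n$ be the common equal sides, so the $i$-th triangle has apex angle $\theta_i$ at $O$ with $\sum_i\theta_i<2\pi$ by hypothesis. The base edges satisfy $a_i=2\ell_{i}\sin(\theta_i/2)$ once we also know adjacent triangles share the equal side; more carefully, since the triangles are glued along equal sides around $O$, all the $\ell_j$ are forced to be equal to a common value $\ell$ (each shared side is simultaneously an equal side of its two neighbours), hence $a_i=2\ell\sin(\theta_i/2)$.

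For existence, I would argue that the free parameter is the position of the common apex $O$ above the plane: as $O$ rises from height $0$ (flat, degenerate case, where the base edges $a_i$ would need to close up a planar polygon, which they do not since the angle sum is $<2\pi$) the base vertices trace out points on circles, and one shows that for exactly one height $h>0$ the $n$ base vertices become concyclic — indeed they lie on a circle of some radius $\rho(h)$ that varies continuously and monotonically, while the planar closure condition \eqref{eq:condition} on $a_1,\dots,a_n$ (inherited from $\sum\theta_i<2\pi$, since then $\sum a_i \ge$ the chord needed to close, and $2\max a_i\le\sum a_i$) guarantees that at the correct $h$ the base edges $a_i$ are exactly the chords of a convex cyclic polygon inscribed in that circle. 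This cyclic polygon, by subsection \ref{subsec:cyclic}, exists and is unique; gluing it to $S$ along the boundary produces a convex polyhedron — a pyramid over the cyclic base with apex $O$.

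For uniqueness, suppose two polygons $Q$ and $Q'$ each close up $S$ into a pyramid. Both resulting solids are convex polyhedra (a pyramid over a convex base is convex), they have the same combinatorial graph (a wheel: $n$ triangles around $O$ plus one $n$-gon face), and all corresponding triangular faces are congruent by construction, since $S$ is the shared part; the only possible discrepancy is in the $n$-gon face, but its side lengths are the fixed numbers $a_1,\dots,a_n$ in that cyclic order, and a convex cyclic polygon with prescribed side lengths in prescribed order is unique, so the two $n$-gon faces are congruent as well. Legendre–Cauchy then forces $Q$ and $Q'$ to give congruent polyhedra, and since they are both attached to the \emph{same} embedded surface $S$ in $\RR^3$, the congruence is the identity, whence $Q=Q'$.

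The main obstacle I anticipate is the existence half, specifically verifying rigorously that there is a height $h$ at which the base vertices are concyclic \emph{and} the resulting planar cyclic polygon is convex with the edges appearing in the prescribed cyclic order — i.e. checking that condition \eqref{eq:condition} genuinely transfers from the hypothesis $\sum\theta_i<2\pi$. A clean way to finesse this is to invoke subsection \ref{subsec:cyclic} directly: the numbers $a_i=2\ell\sin(\theta_i/2)$ satisfy \eqref{eq:condition} precisely because $\sum(\theta_i/2)<\pi$ makes each $\theta_i/2$ an angle of a spherical-type inequality, giving a convex cyclic $n$-gon of some circumradius $R<\ell$; then the apex $O$ sits at height $\sqrt{\ell^2-R^2}$ over the circumcenter, and this pyramid restricts to exactly the isometry type of $S$ — so $S$ can be realized, and any realization is this one by the rigidity argument above.
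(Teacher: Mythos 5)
Your overall strategy coincides with the paper's: show that the base lengths $a_i=2R\sin(\theta_i/2)$ satisfy \eqref{eq:condition}, invoke the existence and uniqueness of the convex cyclic polygon from subsection \ref{subsec:cyclic}, place the apex at height $\sqrt{R^2-r^2}$ above the circumcenter to get existence, and for uniqueness observe that the base of any capping pyramid lies in the intersection of its plane with the sphere of radius $R$ about the apex, hence is that same cyclic polygon. The genuine gap is precisely the step you yourself flag as the main obstacle: the verification of \eqref{eq:condition}. You assert it holds ``precisely because $\sum(\theta_i/2)<\pi$,'' but the angle-sum hypothesis alone does \emph{not} imply \eqref{eq:condition}: take $n=3$, $\theta_1$ close to $\pi$ and $\theta_2=\theta_3$ tiny; then $\sum\theta_i<2\pi$, yet $\sin(\theta_1/2)>\sin(\theta_2/2)+\sin(\theta_3/2)$, so $2\max a_i>\sum a_i$. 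What rescues the claim is the additional geometric fact that the triangles form a \emph{closed} circuit around $O$ in $\RR^3$, which forces the spherical triangle inequality $\theta_i\le\sum_{j\ne i}\theta_j$; the paper combines this with the subadditivity $\sin\alpha_1+\sin\alpha_2\ge\sin(\alpha_1+\alpha_2)$ in the chain \eqref{eq:cuentaca} to derive \eqref{eq:condition} by contradiction. Your argument needs this (or an equivalent) spelled out; as written, the existence half rests on a false implication.

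Two secondary points. Your first existence sketch, varying a single height parameter $h$ until the base vertices become concyclic, misdescribes the configuration space: $S$ is flexible with many degrees of freedom (one per dihedral angle, minus rigid motions), not a one-parameter family, so that route would require substantial extra work. And in the uniqueness half the appeal to Legendre--Cauchy is both circular and unnecessary: you must assume the competing base is convex to apply Theorem \ref{th:cauchy}, but once you know its vertices lie on the circle cut out by the base plane on the sphere of radius $R$ about the apex, the uniqueness of the convex cyclic polygon with sides $a_1,\dots,a_n$ already identifies the base --- which is exactly the paper's Cauchy-free conclusion.
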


\begin{proof}
Denote the triangles by $T_1,\dots,T_n$ in counterclockwise order. For $i=1,\dots,n$, denote by $R$ the length of the equal sides of $T_i$, $a_i$ the remaining one, and $\theta_i$ its opposite angle. Note that $\sin{(\theta_i/2)}=a_i/(2R)$. Without loss of generality, we suppose $a_1=\max\{a_1,\dots,a_n\}$. We claim that condition \eqref{eq:condition} holds. By reductio ad absurdum, suppose $a_1>a_2+\dots+a_n$. Then,
\begin{equation}
\label{eq:cuentaca}
\sin\frac{\theta_1}{2}=\frac{a_1}{2R} >\frac{a_2+\dots+a_n}{2R}
       =\sin\frac{\theta_2}{2} +\dots+\sin\frac{\theta_n}{2}
       \ge\sin\frac{\theta_2+\dots\theta_n}{2},
\end{equation}
where we have repeatedly used $\sin\alpha_1+\sin\alpha_2 \ge \sin(\alpha_1+\alpha_2)$, which follows immediately from the sum of sines formula. By hypothesis, both $\theta_1/2$ and $(\theta_2+\dots+\theta_n)/2$ belong to $(0,\pi)$. So, by \eqref{eq:cuentaca}, it follows that $\theta_1>\theta_2+\dots+\theta_n$, a contradiction.

So, $a_1,\dots,a_n$ are the side lengths of a cyclic convex polygon $P$.
Denote by $r$ its circumradius. Notice that $r<R$ by construction. Consider in $\RR^3$ the sphere of radius $R$ centered at $A=(0,0,\sqrt{R^2-r^2})$, whose intersection with the plane $z=0$ is a circle of radius $r$. Placing $P$ in that circle and joining its vertices with $A$, we get a pyramid that satisfies the statement (see Fig. \ref{fig:bipiramide}). To prove the uniqueness of $P$, consider any other such pyramid. Then, the vertices of its base belong to the intersection of its plane with the sphere of radius $R$ centered at its apex. Hence, the base is a cyclic convex polygon and thus, unique.
\end{proof}

\begin{figure}
\begingroup%
  \makeatletter%
  \providecommand\color[2][]{%
    \errmessage{(Inkscape) Color is used for the text in Inkscape, but the package 'color.sty' is not loaded}%
    \renewcommand\color[2][]{}%
  }%
  \providecommand\transparent[1]{%
    \errmessage{(Inkscape) Transparency is used (non-zero) for the text in Inkscape, but the package 'transparent.sty' is not loaded}%
    \renewcommand\transparent[1]{}%
  }%
  \providecommand\rotatebox[2]{#2}%
  \ifx\svgwidth\undefined%
  %
  %     AL LORO!!!!!
  %     Aquí es donde pongo la escala que me da la gana...
  %
    \setlength{\unitlength}{333.25bp}%
    %{667.54189829bp}%
    \ifx\svgscale\undefined%
      \relax%
    \else%
      \setlength{\unitlength}{\unitlength * \real{\svgscale}}%
    \fi%
  \else%
    \setlength{\unitlength}{\svgwidth}%
  \fi%
  \global\let\svgwidth\undefined%
  \global\let\svgscale\undefined%
  \makeatother%
  \begin{picture}(1,0.39194687)%
    \put(0,0){\includegraphics[width=\unitlength]{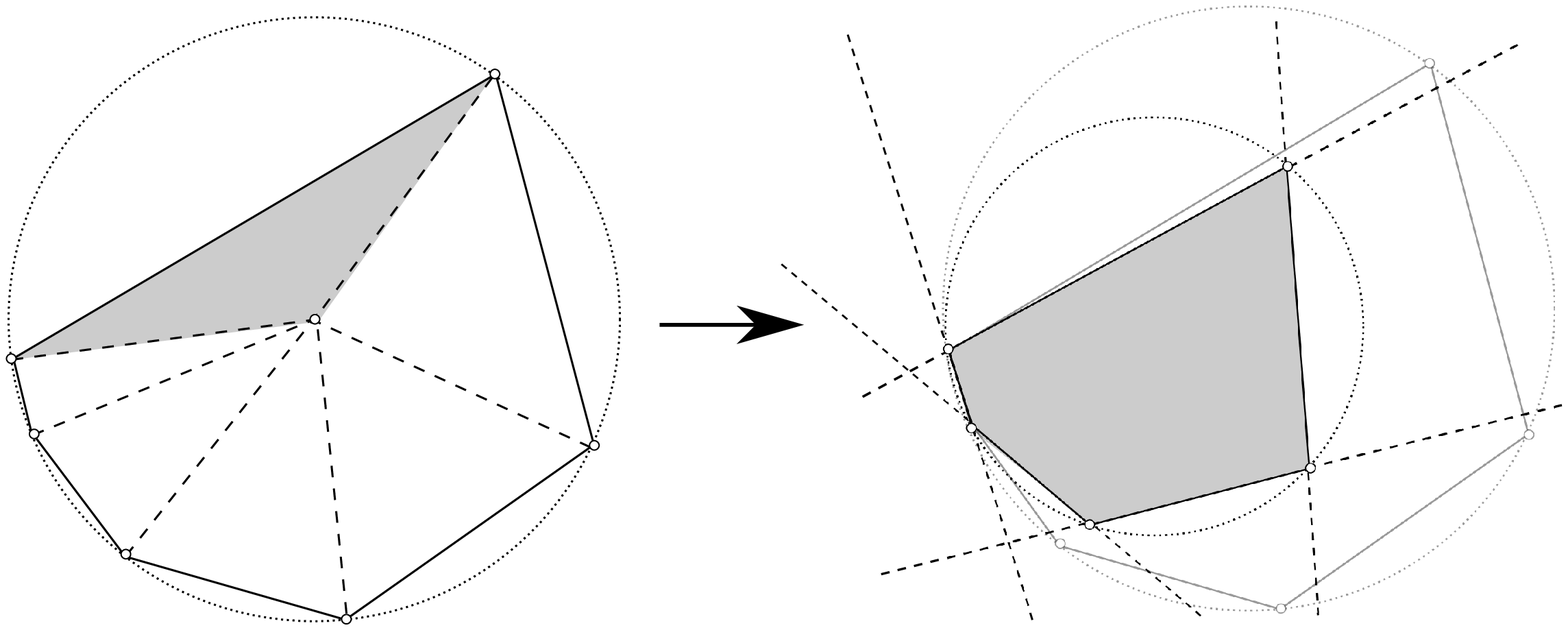}}%
    \put(-0.02086605,0.13237333){\color[rgb]{0,0,0}\makebox(0,0)[lb]{$a_1$}}%
    \put(0.01515277,0.06783962){\color[rgb]{0,0,0}\makebox(0,0)[lb]{$a_2$}}%
    \put(0.14471005,-0.01030429){\color[rgb]{0,0,0}\makebox(0,0)[lb]{$a_3$}}%
    \put(0.28529398,0.01782388){\color[rgb]{0,0,0}\makebox(0,0)[lb]{$a_4$}}%
    \put(0.34932222,0.22592745){\color[rgb]{0,0,0}\makebox(0,0)[lb]{$a_5$}}%
    \put(0.14170848,0.27444238){\color[rgb]{0,0,0}\makebox(0,0)[lb]{$a_6$}}%
    \put(0.58845106,0.13929289){\color[rgb]{0,0,0}\makebox(0,0)[lb]{$a_1$}}%
    \put(0.63830901,0.07821899){\color[rgb]{0,0,0}\makebox(0,0)[lb]{$a_2$}}%
    \put(0.76285871,0.06142719){\color[rgb]{0,0,0}\makebox(0,0)[lb]{$a_3$}}%
    \put(0.83810123,0.18428823){\color[rgb]{0,0,0}\makebox(0,0)[lb]{$a_4$}}%
    \put(0.67877588,0.22900789){\color[rgb]{0,0,0}\makebox(0,0)[lb]{$a_5$}}%
  \end{picture}%
\endgroup%
\caption{Input and output of $\text{CPA}_{5,1}$. The used lengths are  $(a_1,a_2,a_3,a_4,a_5)=(1,2,3,4,5)$, as in the proof of Theorem \ref{th:masalla}.}
\label{fig:axiomC}
\end{figure}
\begin{figure}
\psfrag{R}[][]{$R$}
\psfrag{r}[][]{$r$}
\includegraphics[scale=0.4]{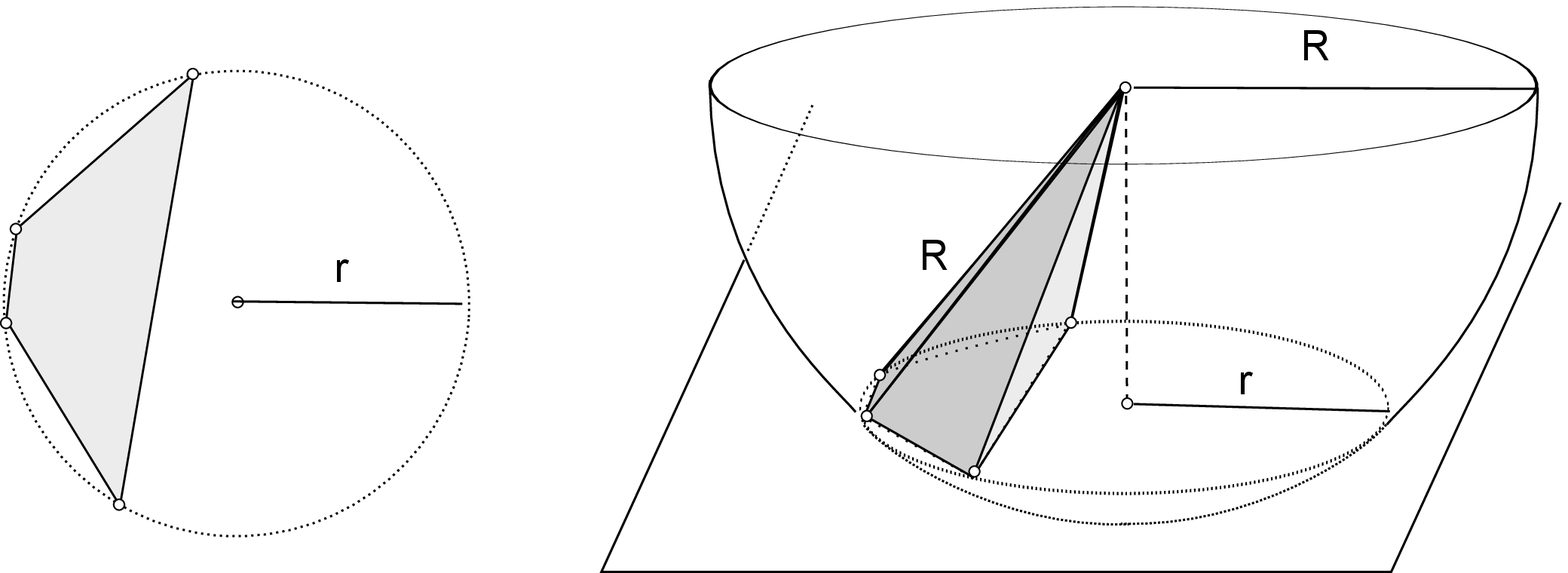}
\caption{Construction of a cyclic non-central polygon with a pyramid.}
\label{fig:bipiramide}
\end{figure}

\subsection{Cyclic Origami Numbers}
We denote by $Cy(a_1,\dots,a_n;A_1,A_2)$ the cyclic polygon of side lengths $a_1,\dots,a_n$ such that $A_1$ and $A_2$ are the vertices of the first side. We shall say that a cyclic polygon is {\em central} if its circumcenter belongs to its interior (this is needed to apply the folding sequence described in section \ref{sec:pyramids}). We present the {\em Cyclic Polygon Axioms} (CPAs, for short) for $n\ge3$ and $k=1,2$:

\begin{description}
\item[$\text{CPA}_{n,k}$] Given the vertices $A_1,\dots,A_{n+k}$ of the central cyclic polygon\newline $P_1=Cy(a_1,\dots,a_{n+k};A_1,A_2)$, we can fold the line containing any side of \newline $P_2=Cy(a_1,\dots,a_n;A_1,A_2)$, provided (1) the circumradius of $P_1$ is greater than that of $P_2$; and (2) $a_1,\dots,a_n$ satisfy condition \eqref{eq:condition}.
\end{description}

\begin{remark}
Condition (1) is needed for the origami construction (see proof of Proposition \ref{prop:cuentaca}). Condition (2) is needed for $P_2$ to exist. $P_2$ may not be central.
\end{remark}

Now, we show that the CPAs and the HJAs can construct any cyclic polygon.
\begin{lemma}
\label{lem:construcyclic}
Given two points $A_1,A_2\in\CC$ and lengths $a_1=|A_2-A_1|,a_2,\dots,a_n$ satisfying condition \eqref{eq:condition}, the vertices of $P=Cy(a_1,\dots,a_n;A_1,A_2)$ can be constructed using the HJAs and the CPAs.
\end{lemma}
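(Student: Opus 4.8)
The plan is to obtain $P$ as the output of a single Cyclic Polygon Axiom applied to a larger cyclic polygon that can be produced with the HJAs alone. Concretely, I would first construct an auxiliary \emph{central} cyclic $(n+2)$-gon $Q$ whose first two vertices are $A_1,A_2$, whose first $n$ side lengths are $a_1,\dots,a_n$, and whose circumradius strictly exceeds the circumradius $r$ of $P$, and then recover $P$ by stripping off the last two sides of $Q$ with $\text{CPA}_{n,2}$.

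First I would fix a constructible real number $R>r$ (a sufficiently large rational will do; the choice is non-effective, but this is harmless for a constructibility statement). Using O2 I would fold the perpendicular bisector of $A_1A_2$ and place on it, by a ruler-and-compass step available through the HJAs, the point $C$ with $|CA_1|=R$ on the appropriate side of $A_1A_2$ — possible because $R>r\ge\frac12 a_1$ — so that $A_1$ and $A_2$ lie on the circle $\Gamma$ of radius $R$ about $C$. Putting $V_1:=A_1$ and $V_2:=A_2$, I would construct $V_3,\dots,V_{n+1}\in\Gamma$ successively: given $V_i\in\Gamma$, let $V_{i+1}$ be the intersection of $\Gamma$ with the circle of radius $a_i$ about $V_i$ reached by moving counterclockwise from $V_i$, a ruler-and-compass step that is legitimate since $a_i\le 2r<2R$. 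Because $R>r$, the arc from $V_1$ to $V_{n+1}$ swept this way has total measure $\sum_{i=1}^{n}2\arcsin(a_i/2R)<2\pi$, so the complementary arc from $V_{n+1}$ back to $V_1$ has positive measure $\beta<2\pi$; I would then take $V_{n+2}$ to be its midpoint, obtained by bisecting $\angle V_{n+1}CV_1$ and intersecting with $\Gamma$, and set $b:=|V_{n+1}V_{n+2}|=|V_{n+2}V_1|=2R\sin(\beta/4)$.

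Now $V_1,\dots,V_{n+2}$, in this cyclic order, are the vertices of a convex cyclic $(n+2)$-gon $Q=Cy(a_1,\dots,a_n,b,b;A_1,A_2)$, all of whose vertices have been constructed with the HJAs. Every side of $Q$ subtends a central angle less than $\pi$ — the first $n$ subtend $2\arcsin(a_i/2R)<\pi$, the last two subtend $\beta/2<\pi$ — so $C$ lies in the interior of $Q$, i.e.\ $Q$ is central; its circumradius is $R>r$; and $a_1,\dots,a_n$ satisfy \eqref{eq:condition} by hypothesis. Hence all hypotheses of $\text{CPA}_{n,2}$ hold with $P_1:=Q$, and it yields the lines containing the sides of $P_2:=Cy(a_1,\dots,a_n;A_1,A_2)=P$; intersecting consecutive such lines with LI then produces the remaining vertices $A_3,\dots,A_n$ of $P$.

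The routine verifications — that $R$ may be chosen larger than $r$, that the auxiliary circle intersections exist, and that $Q$ is central — all reduce to elementary monotonicity of $\sum_{i=1}^{n}2\arcsin(a_i/2R)$ in $R$ together with $a_i\le 2r$. The one genuinely substantive point is the geometric validity of the $\text{CPA}_{n,2}$ step, namely that the folding sequence of Section \ref{sec:pyramids} really carries $Q$ to $Cy(a_1,\dots,a_n;A_1,A_2)$; but that is exactly Proposition \ref{prop:cuentaca}, which we may invoke. So I expect the main obstacle to be organisational: arranging the auxiliary $(n+2)$-gon so that centrality, the circumradius comparison, and condition \eqref{eq:condition} for the surviving sides are met at once — and it is precisely to make centrality come for free (rather than imposing a delicate two-sided bound on $R$) that one should sacrifice two sides, i.e.\ use $k=2$ rather than $k=1$.
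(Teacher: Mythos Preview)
Your argument is correct and follows essentially the same strategy as the paper: embed the data in a circle of constructible radius $R>r$, build an auxiliary central cyclic polygon there with the HJAs, and then strip off the extra sides with a CPA. The only difference is cosmetic: the paper case-splits, using $\text{CPA}_{n,1}$ when the leftover arc already exceeds $\pi$ and otherwise inserting the antipode of $A_2$ before applying $\text{CPA}_{n,2}$, whereas you uniformly bisect the leftover arc and apply $\text{CPA}_{n,2}$, which is arguably tidier since centrality then holds automatically without a two-sided constraint on $R$.
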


\begin{proof}
Take $R\in\QQ$ greater than the circumradius of $P$. Consider the circle $C$ of radius $R$ passing through $A_1,A_2$. Name $O$ its center, and add vertices  $A_j$ in $C$ satisfying $|A_{j+1}-A_{j}|=a_{j}$ for $j=2,\dots,n$. If the angle $\widehat{A_1OA_{n+1}}$ is greater than $\pi$, then $Cy(a_1,\dots,a_n,|A_{n}-A_{n+1}|;A_1,A_2)$ is central, and $\text{CPA}_{n,1}$ gives $P$. If $\widehat{A_1OA_{n+1}}\le\pi$, let $A_2'$ be the antipodal point of $A_2$. Then, we have that $Cy(a_1,\dots,a_n,|A'_2-A_{n+1}|,|A_1-A_2'|;A_1,A_2)$ is central and $\text{CPA}_{n,2}$ gives $P$.
\end{proof}

\begin{remark}
Let $P$ be a cyclic polygon of vertices $A_1,\dots,A_n$, circumcenter $O$ and circumradius $r$ satisfying $\widehat{A_1OA_{n}}>\pi/2$. Then, it is easy to check that we can take $R$ slightly greater than $r$, so that $\text{CPA}_{n,1}$ gives $P$ with the method of Lemma \ref{lem:construcyclic}. Thus, $\text{CPA}_{n,2}$ is only strictly needed when $\widehat{A_1OA_{n}}\le\pi/2$. \end{remark}

\begin{definition}
We say that $\alpha\in\CC$ is a {\em cyclic origami number} if there is a finite sequence of the HJAs and the CPAs that starts with $\{0,1\}$ and ends in $\alpha$. We denote the set of all cyclic origami numbers as $\OO_{CP}$.
\end{definition}

Our last result shows that using the CPAs we can go beyond $\OO_{RP}$.

\begin{theorem}
\label{th:masalla}
The inclusion $\OO_{RP}\subset\OO_{CP}$ is a strict one.
\end{theorem}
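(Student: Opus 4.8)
The plan is to prove the inclusion $\OO_{RP}\subseteq\OO_{CP}$ first, and then to produce a single cyclic origami number whose Galois group is non-solvable, so that it cannot belong to $\OO_{RP}$ by Corollary \ref{cor}. For the inclusion it suffices to simulate one application of $\text{RPA}_n$ by the HJAs together with the CPAs. Given the regular $(n+1)$-gon $A_1,\dots,A_{n+1}$, set $a=|A_2-A_1|$; all its sides have length $a$, and it is central (its circumcenter is its centroid), so it is a legitimate input for $\text{CPA}_{n,1}$, whose output is the collection of lines containing the sides of $Cy(a,\dots,a;A_1,A_2)$ with $n$ copies of $a$ — that is, of the regular $n$-gon with first side $A_1A_2$, which is exactly the output of $\text{RPA}_n$. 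The two provisos of $\text{CPA}_{n,1}$ hold: the circumradius $a/(2\sin(\pi/m))$ of a regular $m$-gon of side $a$ is strictly increasing in $m$, so the $(n+1)$-gon has larger circumradius than the $n$-gon; and the $n$ equal lengths $a,\dots,a$ satisfy \eqref{eq:condition} since $2a\le na$ for $n\ge 3$. Hence $\OO_{RP}\subseteq\OO_{CP}$.

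Next I would exhibit the witness. Following the data in the caption of Fig.~\ref{fig:axiomC}, consider the convex cyclic pentagon $P=Cy(1,2,3,4,5;0,1)$, which exists and is unique by \S\ref{subsec:cyclic} because $2\cdot5\le1+2+3+4+5$. Lemma \ref{lem:construcyclic} shows its vertices $A_1=0$, $A_2=1$, $A_3$, $A_4$, $A_5$ are cyclic origami numbers. Its circumcenter $O$ is the intersection of the mediatrices of two of its sides (axioms O2 and LI), so $O\in\OO_{CP}$, and therefore the squared circumradius $\rho=|A_1-O|^2$ lies in $\OO_{CP}$ as well: it equals $\bigl(\mathrm{Re}(A_1-O)\bigr)^2+\bigl(\mathrm{Im}(A_1-O)\bigr)^2$, and the real and imaginary parts are constructible by dropping perpendiculars to the coordinate axes (axiom O4), $\OO_{CP}$ being a field.

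To see that $\rho\notin\OO_{RP}$, recall from \S\ref{subsec:cyclic} that $\rho$ is a root of a polynomial of degree $7$ whose coefficients lie in $\QQ(a_1^2,\dots,a_5^2)=\QQ$; clearing denominators gives $f\in\ZZ[x]$ with $\deg f=7$ and $f(\rho)=0$. The key claim is that, for the data $(1,2,3,4,5)$, this $f$ is irreducible over $\QQ$ and has exactly $5$ real roots. Granting it, Lemma \ref{lema:symmetric} with $p=7$ (note $p-2=5$) shows that the Galois group of $f$ over $\QQ$ is $S_7$, which is non-solvable; as $f$ is then the minimal polynomial of $\rho$, the number $\rho$ has non-solvable Galois group, so $\rho\notin\OO_{RP}$ by Corollary \ref{cor}. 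Together with the first paragraph this yields $\OO_{RP}\subsetneq\OO_{CP}$.

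The hard part is the claim of the previous paragraph, which is where the real work lies. One must first write down the explicit degree-$7$ Robbins-type polynomial for $(1,2,3,4,5)$ — either by specialising the published formulas for cyclic pentagons (\cite{robbins,varfolomeev1,moritsugu}), or by computing a resultant that eliminates the central half-angles $\varphi_i$ from the closure relation $\varphi_1+\dots+\varphi_5=\pi$ together with $\sin\varphi_i=a_i/(2\sqrt\rho)$ (rationalising the square roots by passing to conjugates) — and then check (i) irreducibility over $\QQ$, for instance by reducing modulo a well-chosen prime or by a computer-algebra factorisation, and (ii) that it has precisely $5$ real zeros, either by a Sturm-sequence count, or numerically, or by matching the real roots with the genuinely real inscribed pentagons (convex, star, and self-intersecting) having those chord lengths on a circle. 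This is a finite but genuinely computational verification; all the geometry and Galois theory surrounding it is routine, and should $f$ fail to be irreducible for this particular tuple one simply replaces $(1,2,3,4,5)$ by another admissible tuple of rationals, or $\rho$ by one of the (equally constructible) diagonal lengths of $P$.
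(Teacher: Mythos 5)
Your overall strategy coincides with the paper's: establish $\OO_{RP}\subseteq\OO_{CP}$ by observing that $\text{CPA}_{n,1}$ applied to a regular $(n+1)$-gon reproduces $\text{RPA}_n$ (your verification of the two provisos is a welcome extra detail the paper omits), and then exhibit a quantity attached to the cyclic pentagon $Cy(1,2,3,4,5;0,1)$ whose Galois group is $S_7$, so that Corollary \ref{cor} excludes it from $\OO_{RP}$. The only structural difference is your choice of witness: you take the squared circumradius $\rho$, whereas the paper takes the diagonal $d=|A_3-A_1|$, for which Varfolomeev's explicit formula immediately yields the degree-7 polynomial $P_d(x)=4x^7+51x^6+160x^5-246x^4-1836x^3-1785x^2+1800x+2160$; the paper then checks by computer algebra that $P_d$ is irreducible and has exactly $5$ real roots, and a cited theorem of Varfolomeev even guarantees the $S_7$ conclusion for every cyclic pentagon. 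Either witness is legitimate in principle, and your argument that $\rho\in\OO_{CP}$ (circumcenter via O2 and LI, then field operations) is correct.

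The gap is that the decisive fact is never established: you state as a ``key claim'' that the degree-7 polynomial of $\rho$ for the side lengths $(1,2,3,4,5)$ is irreducible over $\QQ$ with exactly $5$ real roots, and then only describe how one \emph{would} verify it, even hedging that the tuple or the witness might have to be changed if the check fails. Without the explicit polynomial and the confirmation of irreducibility and of the real-root count, Lemma \ref{lema:symmetric} cannot be invoked and the non-solvability of the Galois group of $\rho$ is unproved; note also that the count of real roots of the circumradius polynomial is a genuinely different computation from the one for the diagonal, and there is no a priori reason it must come out to exactly $p-2=5$. To close the gap you should either carry out the computation for $\rho$ (specialising a Robbins--Sabitov type formula and running an irreducibility test plus a Sturm-sequence count), or switch to the diagonal $d$ and cite the explicit polynomial from \cite{varfolomeev}, as the paper does. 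Everything surrounding this point — the inclusion, the constructibility of the witness, and the Galois-theoretic reduction via Lemma \ref{lema:symmetric} and Corollary \ref{cor} — is correct and matches the paper's argument.
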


\begin{proof}
The inclusion is clear because  both $\text{RPA}_n$ and $\text{CPA}_{n,1}$
yield the same result when applied to a regular $n$-gon. We now show that the inclusion is a strict one.
Consider $Cy(1,2,3,4,5;0,1)$, and write $d=|A_3-A_1|$. Notice that $d\in\OO_{CP}$. Using \cite[formula (1)]{varfolomeev} we obtain the minimal polynomial of $d$:
$$
P_d(x)= 4 x^7+ 51 x^6 + 160 x^5 - 246 x^4- 1836 x^3- 1785 x^2+ 1800 x +2160.
$$
Computer algebra (e.g. {\sc Mathematica$^{TM}$}) shows that $P_d$ is irreducible over $\QQ$. As
$P_d$ has exactly 5 real roots, by Lemma \ref{lema:symmetric} its Galois group over $\QQ$ is isomorphic to $S_7$, which is not solvable. By Corollary \ref{cor}, we have $d\notin\OO_{RP}$.
\end{proof}

\begin{remark}
In fact, in \cite[Theorem 1]{varfolomeev} it is shown that the Galois group of $P_d$ for any cyclic pentagon over the field generated by its side lengths is $S_7$.
\end{remark}

\section{Conclusions and further questions}
\label{sec:conclusions}

Our new axioms just describe a very small part of what can be determined in a plane using 3D origami.
In one hand, the constructions behind our axioms can be generalized by starting with any convex polygon $A_1,\dots,A_{n+1}$ and a chosen center $O$ inside it, so that $|A_1-O|=|A_{n+1}-O|$. If we sacrifice $\widehat{A_1OA_{n+1}}$ as in step 4 of Fig.~\ref{fig:heptagon}, we get a new $n$-gon. We could also
consider more than one node inside the starting polygon. Instead of a pyramid, we would get a different polyhedron.

It follows from  \cite[Theorem 1]{sabitov} that all the diagonals of a polyhedron $P$ are roots of a polynomial whose coefficients are rational numerical functions of the side lengths and of the diagonals of the faces of $P$. This implies that lengths obtained by constructing polyhedra with side lengths in some number field $K$ will be algebraic over $K$. So we cannot expect any {\em polyhedral origami number} to be transcendental.
It is left as a challenge a full characterization of $\mathcal{O}_{CP}$ in terms of field towers. It would also be nice to find explicit elegant folding sequences (not using compass) leading to the construction of cyclic polyhedra of given lengths.

\bibliographystyle{amsalpha}

\end{document}